\newcommand{\C}{\mathbb{C}}
\newcommand{\Z}{\mathbb{Z}}
\newcommand{\Q}{\mathbb{Q}}
\newcommand{\F}{\mathbb{F}}
\newcommand{\A}{\mathbb{A}}
\renewcommand{\P}{\mathbb{P}}
\newcommand{\cG}{\mathcal{G}}
\newcommand{\cH}{\mathcal{H}}
\newcommand{\cD}{\mathcal{D}}
\newcommand{\Gal}{\operatorname{Gal}}
\newcommand{\Res}{\operatorname{Res}}
\newcommand{\rk}{\operatorname{rk}}
\newcommand{\ad}{\operatorname{ad}}
\newcommand{\im}{\operatorname{im}}
\renewcommand{\sc}{\operatorname{sc}}
\renewcommand{\ss}{\operatorname{ss}}
\newcommand{\der}{\operatorname{der}}
\renewcommand{\sp}{\operatorname{sp}}
\newcommand{\Aut}{\operatorname{Aut}}
\newcommand{\Int}{\operatorname{Int}}
\newcommand{\Out}{\operatorname{Out}}
\newcommand{\Dyn}{\operatorname{Dyn}}
\newcommand{\PSU}{\mathrm{PSU}}
\newcommand{\GL}{\mathrm{GL}}
\newcommand{\SL}{\mathrm{SL}}
\newcommand{\PSL}{\mathrm{PSL}}
\newcommand{\GSp}{\mathrm{GSp}}
\newcommand{\LT}{\mathfrak{g}}
\newcommand{\bD}{\mathbf{D}}
\newcommand{\bG}{\mathbf{G}}
\newcommand{\bH}{\mathbf{H}}
\newcommand{\bS}{\mathbf{S}}
\newcommand{\bZ}{\mathbf{Z}}
\newcommand{\primepower}{\ell^f}
\newtheorem{thm}{Theorem}
\newtheorem{cor}[thm]{Corollary}
\newtheorem{prop}[thm]{Proposition}
\newtheorem{remark}[thm]{Remark}
\newtheorem{lem}[thm]{Lemma}
\newtheorem*{claim*}{Claim}
\begin{document}

\title{Type A Images of Galois Representations and Maximality}

\author{Chun Yin Hui}
\address{Chun Yin Hui,
University of Luxembourg
Mathematics Research Unit
6, rue Richard Coudenhove-Kalergi
L-1359 Luxembourg
}

\author{Michael Larsen}
\address{Michael Larsen,
Department of Mathematics,
Indiana University,
Bloomington, IN
47405,
U.S.A.}

\thanks{Michael Larsen was partially supported by NSF grant DMS-1101424.}

\maketitle

\section{Introduction}

Throughout this paper, $K$ will be a number field, $\bar K$ an algebraic closure of $K$, $\mathrm{Gal}_K := \Gal(\bar K/K)$ the absolute Galois group of $K$,
$X$ a complete non-singular variety over $K$, and $i$ a non-negative integer.
Setting $W_\ell := H^i(\bar X,\Q_\ell)$, we obtain a (Serre) compatible system of Galois representations 
$\mathrm{Gal}_K\to \GL(W_\ell)$.
More generally, we consider any subsystem of $\{W_\ell\}$, i.e., any compatible system of Galois representations $\rho_\ell\colon \mathrm{Gal}_K\to \GL(V_\ell)$, 
where each $V_\ell$ is an $n$-dimensional $\mathrm{Gal}_K$-stable subspace of $W_\ell$.
Let $\Gamma_\ell := \rho_\ell(\mathrm{Gal}_K)$ denote the image,
which is a compact $\ell$-adic Lie group.   Let $\bG _\ell$ denote the Zariski closure of $\Gamma_\ell$ in $\GL_n$.

Serre conjectured and later proved \cite{ALR,Se} that 
$\Gamma_\ell = \GL_2(\Z_\ell)$ for all $\ell$ sufficiently large.
There has  been a substantial effort to generalize this
result, to two dimensional Galois representations (see, e.g., \cite{Ribet1,Ribet2}), to higher dimensional abelian varieties
(see, e.g., \cite{Serre86}),  to higher dimensional compatible systems of Galois representations (see, e.g., \cite{Hall}), and even 
to Drinfeld modules (see, e.g., \cite{PR,DP}).  In the general setting of this paper, the naive conjecture,
that $\Gamma_\ell$ is a maximal compact subgroup of $\bG _\ell(\Q_\ell)$ for all $\ell$ sufficiently large, is clearly too optimistic.
For instance, if $X = \C\P^2$, $i=4$, then $\Gamma_\ell$ consists only of the perfect squares in $\Z_\ell^\times\subset \GL_1(\Q_\ell)$.

In order to state a plausible conjecture, we introduce some additional notation.
For any algebraic group $\bG $, we denote by $\bG ^\circ$ the identity component of $\bG $, by $R(\bG )$ the radical of $\bG $, and
by $\bG ^{\ss}$ the (connected) semisimple quotient $\bG ^\circ/R(\bG ^\circ)$.
 If $\Gamma$ is a compact and Zariski-dense subgroup of $\bG (\Q_\ell)$, then 
$$\Gamma^\circ := \Gamma\cap \bG ^\circ(\Q_\ell)$$
is Zariski-dense in $\bG ^\circ$, and 
$$\Gamma^{\ss} := \Gamma^\circ/\Gamma^\circ\cap R(\bG ^\circ)(\Q_\ell)$$
is Zariski-dense
in $\bG ^{\ss}$.  By a theorem of Chevalley \cite{Pink}, $\Gamma^{\ss}$ is an open subgroup of $\bG ^{\ss}(\Q_\ell)$.
It is only the semisimple part of $\Gamma_\ell$, namely $\Gamma_\ell^{\ss}$, that we are interested in, 
since, as we have seen, we cannot expect maximality for the toral part of a Galois representation, and (conjecturally)
$\bG _\ell$ should always be reductive.
A more delicate point is that we cannot always expect  $\Gamma_\ell^{\ss}$ to be a maximal compact subgroup of $\bG ^{\ss}(\Q_\ell)$, since $\rho_\ell$
may factor through a central isogeny $\tilde{\bG}_\ell \to \bG _\ell$.  It can easily happen that the image of a maximal compact of 
$\tilde{\bG}(\Q_\ell)$ fails to be maximal in $\bG _\ell(\Q_\ell)$.

A solution to this difficulty is to define $\bG ^{\sc}$ to be the universal covering group of $\bG ^{\ss}$ and
$\Gamma^{\sc}$ to be the inverse image of $\Gamma^{\ss}$ under $\bG ^{\sc}(\Q_\ell)\to \bG ^{\ss}(\Q_\ell)$.
There is no purely group-theoretic reason to expect $\Gamma_\ell^{\sc}$ to be any less than a maximal compact subgroup of
$\bG _\ell^{\sc}(\Q_\ell)$.  The second-named author proved \cite{Larsen} that for a density $1$ set of primes $\ell$,
$\Gamma_\ell^{\sc}$ is not only a maximal compact subgroup but a \emph{hyperspecial} maximal compact.
Conjecturally, the same thing should be true for all $\ell$ sufficiently large.  In this paper, we prove it under a
group theoretic hypothesis on $\bG _\ell$.

We say that an algebraic group $\bG $ over an algebraically closed field $F$ is \emph{of type A} if every simple factor of $\bG ^{\sc}$ (defined above) is a product of groups of the form $\SL_{m_i}$.   An algebraic group $\bG $ over a general field $F$ is of type A if the group
obtained from $\bG $ by extending scalars to $\bar F$ is so.

Our main theorem is the following:

\begin{thm}
\label{main}
For all $\ell$ sufficiently large, if $\bG _\ell$ is of type A, then $\Gamma^{\sc}_\ell$ is
a hyperspecial maximal compact subgroup of $\bG _\ell^{\sc}(\Q_\ell)$ and $\bG _\ell^{\sc}$ is unramified over $\Q_\ell$.
\end{thm}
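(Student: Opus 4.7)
\emph{Plan.} The strategy is to combine the second author's density $1$ maximality theorem with a Nori-type algebraic approximation modulo $\ell$ and the rigidity of type A rational forms, transferring information from a reference prime to all large $\ell$ through the compatible system. I would first pick a reference prime $\ell_0$ from the density $1$ set where $\Gamma_{\ell_0}^{\sc}$ is already hyperspecial; this fixes the multiset $\{m_i\}$ of factor ranks of $\bG^{\sc}$. Standard $\ell$-independence results (rank of the Zariski closure, Frobenius characteristic polynomials at unramified places), together with the type A hypothesis, propagate this data: for every sufficiently large $\ell$, $\bG_\ell^{\sc}$ is geometrically a product of $\SL_{m_i}$ with the same multiset.

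Next, for an arbitrary large $\ell$, I would apply Nori's theorem to the image $\bar\Gamma_\ell \subset \GL_n(\F_\ell)$ to produce a connected algebraic $\F_\ell$-subgroup $H_\ell \subset \bG_{\ell,\F_\ell}$ whose $\F_\ell$-points approximate $\bar\Gamma_\ell$ up to bounded index. After passing to the simply connected cover, $H_\ell^{\sc}$ is of type A with factor ranks $\{m_i\}$. The central issue is to identify its $\F_\ell$-rational form: using Chebotarev and the compatibility of Frobenius characteristic polynomials with the reference prime $\ell_0$, and exploiting that type A forms are classified by central simple algebras and quadratic extensions alone, each simple factor is forced to be either $\SL_{m_i}$ or $\SU_{m_i}$ with respect to an unramified quadratic extension --- exactly the forms admitting a hyperspecial parahoric. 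Lifting this $\F_\ell$-model to a smooth reductive $\Z_\ell$-model yields unramifiedness of $\bG_\ell^{\sc}$ over $\Q_\ell$, and compactness of $\Gamma_\ell^{\sc}$ together with surjectivity onto the $\F_\ell$-reduction then forces $\Gamma_\ell^{\sc}$ to equal the corresponding hyperspecial parahoric by the usual Bruhat--Tits argument.

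\emph{Main obstacle.} The hardest step is the $\F_\ell$-rational structure identification: ruling out anisotropic or ramified twisted forms of the $\SL_m$ factors using only the trace data of the compatible system. The type A hypothesis is indispensable precisely because non-split type A forms leave a detectable fingerprint on Frobenius traces --- through the Brauer invariant of a central simple algebra or through a quadratic character --- that can be matched against the reference prime $\ell_0$. Without type A, outer-automorphism phenomena in more complicated root systems (triality for $D_4$, graph automorphisms of $E_6$, etc.) would not be cleanly distinguishable from the trace data alone, obstructing the comparison with the reference prime and explaining why the theorem is stated conditionally on $\bG_\ell$ being of type A.
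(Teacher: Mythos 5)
Your proposal takes a genuinely different route from the paper, but it has two substantive gaps that the paper's rank-counting strategy is specifically designed to avoid.

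First, you never address how to establish that the Nori approximation $H_\ell$ of $\bar\Gamma_\ell$ actually has full rank, i.e., that $\bar\Gamma_\ell$ is ``big enough'' inside $\bG_\ell(\F_\ell)$ to begin with. Nori's machinery produces $H_\ell$ and tells you $H_\ell(\F_\ell)^+ \approx \bar\Gamma_\ell$ up to bounded index, but without additional input $H_\ell$ could be a proper subgroup of (the special fiber of a model of) $\bG_\ell^{\sc}$. The paper obtains the crucial inequality $\rk_\ell\Gamma_\ell^{\sc}\ge\rk\bG_\ell^{\sc}$ by a delicate bookkeeping argument over the full cohomology group: it realizes $V_\ell$ as a subsystem of $W_\ell=H^i(\bar X,\Q_\ell)$, invokes \cite[Theorem~A]{Hui2} to get $\rk_\ell\Lambda_\ell=\rk\mathbf{L}_\ell$ for the ambient system, uses additivity of $\rk_\ell$ across the short exact sequence $1\to\Delta_\ell\to\Lambda_\ell\to\Gamma_\ell\to 1$, and bounds $\rk_\ell\Delta_\ell$ from above by $\rk\mathbf{D}_\ell^\circ$ via Theorem~\ref{l-adic}(i). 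Your sketch has no replacement for this step; a reference prime $\ell_0$ on the density-$1$ set plus $\ell$-independence of the Zariski closure's rank tells you what $\rk\bG_\ell^{\sc}$ is, but not that $\bar\Gamma_\ell$ realizes that rank.

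Second, the form-identification step as you describe it would not go through. You propose to ``rule out anisotropic or ramified twisted forms of the $\SL_{m_i}$ factors using only the trace data of the compatible system,'' but the Frobenius characteristic polynomials available from compatibility are at places $p\ne\ell$; they constrain semisimple conjugacy classes in $\bG_\ell$ over $\bar\Q_\ell$ but carry no direct information about the ramification of the $\Q_\ell$-form of $\bG_\ell^{\sc}$ at $\ell$. Relatedly, you misattribute the role of the type A hypothesis: it is not that type A forms are better ``fingerprinted'' by traces (type A has outer forms $\SU_m$ just as other types have their twists), but rather that type A is exactly the case where proper closed subgroups are forced to have strictly smaller rank, because the extended Dynkin diagram of $A_{m-1}$ has a transitive automorphism group and hence no proper full-rank subdiagrams. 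This is Lemma~\ref{A-lemma}, and it is the lever that converts the rank equality $\rk_\ell\Gamma_\ell^{\sc}=\rk\bG_\ell^{\sc}$ into the conclusion that the Nori group exhausts the ambient group. In the paper the unramifiedness of $\bG_\ell^{\sc}$ is not proved directly from trace data but is extracted at the end, as a consequence of the constructed smooth $\Z_\ell$-model $\cH$ having reductive special fiber (forced by the $\ell$-dimension count, Corollary~\ref{finite-cor}(iv)) and the characterization of unramified groups as those with a hyperspecial maximal compact.

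In short: your high-level architecture (density-$1$ reference prime, Nori, Bruhat--Tits) overlaps with the paper's toolbox, but the two steps that actually carry the proof — forcing $\rk_\ell\Gamma_\ell^{\sc}\ge\rk\bG_\ell^{\sc}$ via the ambient system, and converting rank equality into maximality via Lemma~\ref{A-lemma} — are absent, and the substitute you offer (form classification from traces) does not supply them.
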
 

The key idea in the proof of Theorem~\ref{main} is to compare the ``rank'' of $\Gamma_\ell$ (or its (mod $\ell$) reduction) to that of its Zariski-closure.  To some extent, this parallels the strategy of \cite{Larsen}.
A key difference is that in \cite{Larsen}, one compares two different notions of dimension rather than rank, though it should be added that
a version of dimension (somewhat different from that in \cite{Larsen}) does play some role in this paper.

We would like to thank Frank Calegari for his comments on an earlier draft of this paper.
\bigskip

\section{The Main Theorem}

There are several different definitions in the literature of the rank of an algebraic group $\bG $.  
For us, the rank of $\bG $, denoted $\rk \bG $, will always mean the absolute rank of the semisimple group $\bG ^{\ss}$.
What is special about groups of type A for our purposes is the following.

\begin{lem}
\label{A-lemma}
If $\bG $ is a connected, semisimple group of type A and $\bH$ is any proper closed subgroup of $\bG $,
then $\rk \bH < \rk \bG $.
\end{lem}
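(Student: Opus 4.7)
The plan is to argue by contradiction: assume $\rk\bH = \rk\bG$ and deduce $\bH = \bG$. Since both hypothesis and conclusion are geometric, I would base change to $\bar F$ at the outset.

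First, I would reduce to the case $\bG = \prod_i\SL_{n_i}$. Let $\pi\colon \bG^{\sc}\to\bG$ be the central isogeny; by the type A hypothesis, $\bG^{\sc}\cong\prod_i\SL_{n_i}$. Put $\tilde\bH := \pi^{-1}(\bH)$, which is closed and proper in $\bG^{\sc}$ (otherwise $\bH = \pi(\tilde\bH) = \bG$). The restriction $\pi|_{\tilde\bH}\colon\tilde\bH\to\bH$ is itself a central isogeny, and central isogenies preserve the invariant $\rk(-)$; hence $\rk\tilde\bH = \rk\bH$ and $\rk\bG^{\sc} = \rk\bG$, so I may replace $(\bG,\bH)$ by $(\bG^{\sc},\tilde\bH)$.

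Next, I would show that $\tilde\bH^\circ$ contains a maximal torus of $\bG^{\sc}$. Indeed, the chain $\rk\tilde\bH = \rk\tilde\bH^{\ss} \le \rk\tilde\bH^\circ \le \rk\bG^{\sc}$ collapses to equality under the hypothesis (the last inequality because any torus in $\tilde\bH^\circ$ is a torus in $\bG^{\sc}$), so a maximal torus $T$ of $\tilde\bH^\circ$ is a maximal torus of $\bG^{\sc}$; conjugate $T$ to be diagonal. Then the $T$-stable Lie subalgebra $\mathrm{Lie}(\tilde\bH^\circ) \supseteq \mathfrak{t}$ has the form $\mathfrak{t}\oplus\bigoplus_{\alpha\in S}\mathfrak{g}_\alpha$ for a closed subset $S$ of the root system $\Phi = \sqcup_i\Phi_i$, with each $\Phi_i$ of type $A_{n_i-1}$. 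Since roots in distinct $\Phi_i$ never sum to a root, $S$ decomposes as $\sqcup_i S_i$ with $S_i := S\cap\Phi_i$ closed. The Levi decomposition identifies the semisimple rank of $\tilde\bH^\circ$ with $\sum_i\dim\mathrm{span}(S_i^{\mathrm{sym}})$, where $S_i^{\mathrm{sym}} := S_i\cap(-S_i)$; the hypothesis then forces each $S_i^{\mathrm{sym}}$ to have full rank $n_i-1$ in $\Phi_i$.

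The remaining, type-A-specific step is to show that any closed symmetric subset $\Psi$ of $A_m$ of full rank equals $A_m$. Realizing $A_m = \{\epsilon_i - \epsilon_j : 1\le i\neq j\le m+1\}$, I would declare $i\sim j$ iff $i=j$ or $\epsilon_i - \epsilon_j\in\Psi$: symmetry of $\sim$ comes from symmetry of $\Psi$, and transitivity from closure of $\Psi$ via the identity $(\epsilon_i-\epsilon_j)+(\epsilon_j-\epsilon_k) = \epsilon_i-\epsilon_k$. Then $\Psi$ is the disjoint union over equivalence classes $C$ of the $A_{|C|-1}$ sub-root-systems supported on each $C$, so $\rk\Psi = (m+1) - (\text{number of classes})$, and full rank forces a single class, giving $\Psi = A_m$. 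Applying this to each factor yields $S_i^{\mathrm{sym}} = \Phi_i$, hence $S_i = \Phi_i$, $S = \Phi$, and $\mathrm{Lie}(\tilde\bH^\circ) = \mathrm{Lie}(\bG^{\sc})$; by connectedness $\tilde\bH = \bG^{\sc}$ and $\bH = \bG$, the desired contradiction. The main obstacle, and the only place where the type A hypothesis is essential, is exactly this last combinatorial step: for other Dynkin types Borel--de Siebenthal produces proper full-rank closed symmetric subsystems (for example $D_n\subset B_n$ or $4A_1\subset D_4$), and the lemma genuinely fails.
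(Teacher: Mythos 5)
Your proof is correct, and it takes a genuinely different route from the paper's. Both start with the same reduction to $\bG = \prod_i \SL_{m_i}$. After that the paper splits into cases: if $\bH$ has a nontrivial unipotent radical it invokes the Borel--Tits theorem to place $\bH$ inside a proper parabolic and compares ranks through the Levi; if $\bH$ is reductive it quotes the fact that $\SL_m$ has no proper equal-rank semisimple subgroup (a Borel--de Siebenthal statement read off from the transitivity of automorphisms of the extended Dynkin diagram), then handles a product by induction. You instead derive from $\rk\bH = \rk\bG$ that $\bH^\circ$ contains a full maximal torus $T$ of $\bG^{\sc}$, describe $\bH^\circ$ by its closed set $S$ of $T$-roots, split $S$ into its symmetric part and its special (unipotent) part, and reduce everything to the purely combinatorial fact that a full-rank closed symmetric subset of $A_m$ must be all of $A_m$ --- which you prove directly via the equivalence relation $i\sim j \Leftrightarrow \epsilon_i - \epsilon_j \in \Psi$. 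This avoids both Borel--Tits and Borel--de Siebenthal as black boxes and handles all factors at once rather than by induction; what it costs you is that you implicitly need the standard structural fact (valid in all characteristics, with care about structure constants --- harmless in type A since they are $\pm1$) that a connected subgroup containing $T$ is $\langle T, U_\alpha : \alpha \in S\rangle$ with $S$ closed, whose special part gives the unipotent radical. One small notational point: when you write the chain $\rk\tilde\bH = \rk\tilde\bH^{\ss} \le \rk\tilde\bH^\circ \le \rk\bG^{\sc}$, the middle term should be the Lie rank (dimension of a maximal torus) of $\tilde\bH^\circ$, not $\rk$ in the paper's sense, which always denotes the semisimple rank; with that reading the chain does what you want. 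Your closing remark is apt: the combinatorial step is exactly where type A is essential, and the counterexamples $D_n\subset B_n$ and $4A_1\subset D_4$ show the lemma fails outside type A.
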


\begin{proof}
Replacing $\bG $ by its universal
covering group $\bG ^{\sc}$ and $\bH$ by $\bH\times_\bG  \bG ^{\sc}$ does not change ranks, so we may assume $\bG $
is simply connected.  Extending scalars, we may assume that $\bG  = \prod_i \SL_{m_i}$
for some sequence of integers $m_i\ge 2$.

We use induction on the number of factors.  We first treat the base case, $\bG  = \SL_m$.  
Without loss of generality, we may assume that $\bH$ is connected.  If
the unipotent radical $\mathbf{U}$ of $\bH$ is non-trivial, then by a classical theorem of Borel and Tits \cite{BT},
there exists a proper parabolic subgroup $\mathbf{P}$ of $\SL_m$ such that $\bH$ is contained in $\mathbf{P}$
and $\mathbf{U}$ is contained in the unipotent radical $\mathbf{N}$ of $\mathbf{P}$.  Thus $\bH/\mathbf{U}$ is 
a connected reductive group contained in the Levi factor
$\mathbf{M} \cong \mathbf{P}/\mathbf{N}$.  The semisimple rank of $\bH$ is the same as that of $\bH/\mathbf{U}$, which is the same as the
rank of the derived group of $\bH/\mathbf{U}$, which is isomorphic to a subgroup of the derived group of $\mathbf{M}$.
Since the derived group of $\mathbf{M}$ is a semisimple group of rank less than $\rk \bG $, any subgroup of $\mathbf{M}$
also has rank less than $\rk \bG $.

Next, we consider the case that $\bH$ is connected and reductive.  The derived group of $\bH$ is 
therefore a semisimple subgroup of $\SL_m$ whose rank equals $\rk \bH$.  However, it is well-known that 
$\SL_m$ has no proper equal-rank semisimple subgroup (because the extended Dynkin diagram of
$\SL_m$ has a transitive automorphism group).  Therefore, again $\rk \bH < \rk \bG $.

For the induction step, let
$\bH_1$ denote the image of $\bH$ under the projection to $\SL_{m_1}$ 
and $\bH^1$ denote the intersection of $\bH$ with the product of
factors other than $\SL_{m_1}$.  Thus, $\bH_1 \cong \bH/\bH^1$, and $\rk \bH = \rk \bH_1+ \rk \bH^1$.
The induction hypothesis implies that this sum equals $\rk \bG $ if and only if $\bH_1 = \SL_{m_1}$ 
and $\bH^1$ equals the product of all the remaining factors, in which case $\bH=\bG $.
\end{proof}

Let $\ell \ge 5$ be prime and $\LT$ a \emph{Lie type} (e.g., $A_n,B_n,C_n,D_n,...$). 
We define the \emph{$\ell$-dimension} function, $\dim_\ell$, the \emph{$\LT$-type $\ell$-rank} function, $\rk^{\LT}_\ell$, and 
the \emph{total $\ell$-rank} function, $\rk_\ell$, on
finite groups. The definitions of $\rk^{\LT}_\ell$ and $\rk_\ell$
can also be found from \cite[Definition 14, Remark 3.3.2]{Hui2}.
The dimension of an algebraic group $\bG /F$ as an $F$-variety is denoted $\dim\bG $, and we write $\dim\LT$ for $\dim\bG$ for any simple algebraic group $\bG$ of type $\LT$.
Let $\bar\Gamma$ be a finite simple group of Lie type in characteristic $\ell$. 
Then there exists some adjoint simple group $\bar{\bG}/\F_{\ell^f}$ so that
$$\bar\Gamma=\bar{\bG}(\F_{\ell^f})^{\der},$$
the derived group of the group of $\F_{\ell^f}$-rational points of $\bar{\bG}$.
By base change to $\bar\F_\ell$, we obtain
$$\bar{\bG}\times_{\F_{\ell^f}}\bar\F_\ell=\prod^m\bar{\bH},$$
where $\bar{\bH}$
is an $\bar\F_\ell$-adjoint simple group of some Lie type $\mathfrak{h}$.
We then set the $\ell$-dimension of $\bar\Gamma$ to be 
$$\dim_\ell\bar\Gamma:=f\cdot\dim\bar{\bG},$$
the $\LT$-type $\ell$-rank of $\bar\Gamma$ to be
\begin{equation*}
\rk^{\LT}_\ell\bar\Gamma :=\left\{ \begin{array}{lll}
 f\cdot\rk \bar{\bG} &\mbox{if}\hspace{.1in} \LT=\mathfrak{h},\\
 0 &\mbox{otherwise,}
\end{array}\right.
\end{equation*}
and the total $\ell$-rank of $\bar\Gamma$ to be 
$$\rk_\ell\bar\Gamma:=\sum_{\LT}\rk_\ell^{\LT}\bar\Gamma.$$
For example, $\PSL_n(\F_{\ell^f})$ and $\PSU_n(\F_{\ell^f})$ both have $\ell$-dimension $f(n^2-1)$, $A_{n-1}$-type $\ell$-rank and total $\ell$-rank $f(n-1)$.
For simple groups which are not of Lie type in characteristic $\ell$, we define the $\ell$-dimension and $\LT$-type $\ell$-rank to be zero.  
In particular, the total $\ell$-rank of $\Z/p\Z$ is zero, even if $p=\ell$.
We extend the definitions
to arbitrary finite groups by defining the $\ell$-dimension, $\LT$-type $\ell$-rank, and total $\ell$-rank of any finite group to be the sum of the ranks of its composition factors.
This definition makes it clear that $\dim_\ell$, $\rk_\ell^{\LT}$, and $\rk_\ell$ are additive on short exact sequences of groups.  In particular, the $\ell$-dimension and the total $\ell$-rank of
every solvable finite group are zero.

We can further extend the definitions to certain infinite profinite groups, including compact subgroups of $\GL_n(\Q_\ell)$, as follows.
If $\Gamma$ is a finitely generated profinite group which contains an open pro-solvable subgroup, then 
\begin{equation}
\label{defs}
\begin{split}
\dim_\ell\Gamma &:= \dim_\ell\Gamma/\Delta, \\
\rk_\ell^{\LT} \Gamma &:= \rk_\ell^{\LT} \Gamma/\Delta, \\
\rk_\ell \Gamma &:= \rk_\ell \Gamma/\Delta
\end{split}
\end{equation}
for any normal, pro-solvable, open subgroup $\Delta$ of $\Gamma$. Since for any profinite group,
open normal subgroups are cofinal among all open subgroups and if $\Delta_1\subset \Delta_2$ are normal, prosolvable, and open,
then  $\Delta_2/\Delta_1$ is finite and solvable, the right hand side in each formula in (\ref{defs}) is well-defined.  
As the $\ell$-dimension and the total $\ell$-rank of every pro-$\ell$ group is zero, we have
\begin{equation*}
\begin{split}
\dim_\ell\Gamma_\ell &= \dim_\ell\bar\Gamma_\ell,\\
\rk_\ell^{\LT} \Gamma_\ell &= \rk_\ell^{\LT} \bar\Gamma_\ell,\\
\rk_\ell \Gamma_\ell &= \rk_\ell \bar\Gamma_\ell,
\end{split}
\end{equation*}
where $\bar\Gamma_\ell$
denotes the image in $\GL_n(\F_\ell)$ the reduction (mod $\ell$) of $\Gamma_\ell$ with
respect to any $\Z_\ell$-lattice in $\Q_\ell^n$ stabilized by $\Gamma_\ell$.  

If $\Gamma$ is a compact subgroup of $\GL_n(\Q_\ell)$ and $\Delta$ is a closed normal subgroup, we fix
a normal open, pro-$\ell$ subgroup $\Gamma_0 $ of $\Gamma$ and define 
\begin{equation*}
\begin{aligned}
\Delta_0 & := \Delta\cap \Gamma_0 \\
(\Gamma/\Delta)_0 & := \Gamma_0/\Delta_0\subset \Gamma/\Delta
\end{aligned}
\end{equation*}
The two groups above are pro-$\ell$ since closed subgroups and quotient groups of a pro-$\ell$ group are still pro-$\ell$ \cite[Proposition 1.11]{DDMS}. We have a short exact sequence of finite groups
$$0\to \Delta/\Delta_0\to \Gamma/\Gamma_0\to (\Gamma/\Delta)/(\Gamma/\Delta)_0\to 0,$$
and therefore 

\begin{equation*}
\begin{split}
\dim_\ell \Gamma   &= \dim_\ell \Delta + \dim_\ell \Gamma/\Delta,\\
\rk_\ell^{\LT} \Gamma  &= \rk_\ell^{\LT} \Delta + \rk_\ell^{\LT} \Gamma/\Delta,\\
\rk_\ell\Gamma&=\rk_\ell\Delta+\rk_\ell\Gamma/\Delta.
\end{split}
\end{equation*}

Our basic results on $\dim_\ell$, $\rk_\ell^{\LT}$, and $\rk_\ell$ of finite groups are the following.

\begin{prop} \label{points}
Let $\ell\geq5$ be a prime and $\bar{\bG}$ a connected algebraic group over $\mathbb{F}_{\primepower}$. 
The composition factors of $\bar{\bG}(\mathbb{F}_{\primepower})$ are cyclic groups and finite simple groups of Lie type of characteristic $\ell$. Moreover, let $m_\LT$ be the number of almost simple factors of $\bar{\bG}^{\sc}\times_{\F_{\primepower}}\bar{\F}_{\primepower}$ of simple type $\LT$. Then the following equations hold:
\begin{enumerate}
\item[(i)] $\rk_\ell^\LT\bar{\bG}(\F_{\primepower})=m_\LT f\cdot\rk\LT$.
\item[(ii)] $\rk_\ell\bar{\bG}(\F_{\primepower})=f\cdot\rk\bar{\bG}$.
\item[(iii)] $\dim_\ell \bar{\bG}(\F_{\primepower})=f\sum_{\LT}m_{\LT}\cdot \dim\LT= f\cdot\dim \bar{\bG}^{\ss}$.
\end{enumerate}
\end{prop}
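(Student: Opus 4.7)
The plan is a three-step reduction exploiting two features of the invariants: additivity on short exact sequences and vanishing on solvable groups. Along the way the assertion about composition factors drops out because each step contributes either cyclic groups or finite simple groups of Lie type in characteristic $\ell$.

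Step one reduces to the semisimple quotient. Applying Lang's theorem to the connected group $\bar{\bG}$, the short exact sequence $1 \to R(\bar{\bG}) \to \bar{\bG} \to \bar{\bG}^{\ss} \to 1$ remains surjective on $\F_{\ell^f}$-points, and $R(\bar{\bG})(\F_{\ell^f})$ is a solvable finite group whose composition factors are cyclic. Step two reduces to the simply connected cover via the central isogeny $\bar{\bG}^{\sc} \to \bar{\bG}^{\ss}$ with finite kernel $\bar Z$; Lang's theorem applied to the connected group $\bar{\bG}^{\sc}$ yields the four-term exact sequence
$$1 \to \bar Z(\F_{\ell^f}) \to \bar{\bG}^{\sc}(\F_{\ell^f}) \to \bar{\bG}^{\ss}(\F_{\ell^f}) \to H^1(\F_{\ell^f}, \bar Z) \to 1,$$
whose flanking groups are finite abelian, once more contributing only cyclic composition factors and zero to each invariant.

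Step three tackles $\bar{\bG}^{\sc}(\F_{\ell^f})$ directly. Decompose $\bar{\bG}^{\sc}$ into its $\F_{\ell^f}$-almost-simple factors $\bar{\bG}_j^{\sc}$, indexed by the Galois orbits on the $\bar\F_\ell$-almost-simple factors of $\bar{\bG}^{\sc}$. If the $j$-th orbit has size $k_j$ and type $\LT_j$, then $\bar{\bG}_j^{\sc}$ is a form of a Weil restriction from $\F_{\ell^{fk_j}}$, so $\rk \bar{\bG}_j^{\sc} = k_j\, \rk \LT_j$ and $\dim \bar{\bG}_j^{\sc} = k_j\, \dim \LT_j$. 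The group $\bar{\bG}_j^{\sc}(\F_{\ell^f})$ is a quasi-simple finite group of Lie type in characteristic $\ell$ with cyclic center, and its unique non-abelian composition factor is $\bar{\bG}_j^{\ad}(\F_{\ell^f})^{\der}$, a finite simple group of type $\LT_j$. By the definitions in the excerpt, this factor contributes $f\cdot \rk \bar{\bG}_j^{\ad} = fk_j\,\rk\LT_j$ to $\rk_\ell^{\LT_j}$ and $f\cdot\dim \bar{\bG}_j^{\ad} = fk_j\,\dim\LT_j$ to $\dim_\ell$. Summing $k_j$ over orbits of fixed type $\LT$ yields $m_\LT$ and proves (i); summing over all $\LT$, together with $\rk\bar{\bG} = \sum_j k_j\,\rk\LT_j$ and $\dim\bar{\bG}^{\ss} = \sum_j k_j\,\dim\LT_j$, gives (ii) and (iii).

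The main obstacle I expect is the bookkeeping in step three for twisted forms, since for groups like unitary or triality ones the Weil-restriction picture is not naive; one has to verify that $\bar{\bG}_j^{\ad}$ is genuinely the adjoint $\F_{\ell^f}$-group supplied by the definition of the finite simple group $\bar{\bG}_j^{\ad}(\F_{\ell^f})^{\der}$, and that the number of $\bar\F_\ell$-factors of type $\LT$ really controls the $\LT$-rank. This is a careful but standard application of Steinberg's dictionary for finite groups of Lie type, made considerably cleaner by the fact that both the absolute rank and the absolute dimension are invariant under base change to $\bar\F_\ell$ and hence insensitive to the twist.
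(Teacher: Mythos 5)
Your proof is correct and follows essentially the same reduction pattern as the paper: use Lang's theorem to pass to the simply connected semisimple case, then analyze $\F_{\ell^f}$-points there. The two small differences are that you collapse the paper's two-step reduction (first kill the unipotent radical, then the center) into a single step via the full radical $R(\bar{\bG})$, and that you carry out the simply connected semisimple case in detail via the Galois-orbit/Weil-restriction decomposition and Steinberg's structure theory, whereas the paper delegates exactly that step (the composition-factor assertion and parts (i), (ii)) to a citation of \cite[Prop.\ 3.3.3]{Hui2} and observes (iii) follows from the definitions. One cosmetic inaccuracy in your write-up: the center of $\bar{\bG}_j^{\sc}(\F_{\ell^f})$ need not be cyclic (type $D_n$ with $n$ even yields $(\Z/2\Z)^2$); the argument only needs that it is abelian, so nothing changes.
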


\begin{proof}
Since the $\ell$-dimension and the total $\ell$-rank of solvable groups are zero, we perform the following reductions. Let $R^\mathrm{u}(\bar{\bG})$ be the unipotent radical of $\bar{\bG}$. We have exact sequence
$$1\to R^\mathrm{u}(\bar{\bG})(\F_{\primepower})\to \bar{\bG}(\F_{\primepower})\to \bar{\bG}/R^\mathrm{u}(\bar{\bG})(\F_{\primepower})\to H^1(\F_{\primepower}, R^\mathrm{u}(\bar{\bG})).$$
Since $R^\mathrm{u}(\bar{\bG})(\F_{\primepower})$ is solvable and $H^1(\F_{\primepower}, R^\mathrm{u}(\bar{\bG}))=0$, we may assume $\bar{\bG}$ reductive. Let $\bar{\mathbf{Z}}$ be the center of $\bar{\bG}$. We have exact sequence
$$1\to \bar{\mathbf{Z}}(\F_{\primepower})\to \bar{\bG}(\F_{\primepower})\to \bar{\bG}/\bar{\mathbf{Z}}(\F_{\primepower})\to H^1(\F_{\primepower}, \bar{\mathbf{Z}}).$$
Since $\bar{\mathbf{Z}}(\F_{\primepower})$ and $H^1(\F_{\primepower}, \bar{\mathbf{Z}})$ are abelian, we may assume $\bar{\bG}=\bar{\bG}^{\ss}$, i.e., $\bar{\bG}$ is semisimple.
If $\bar{\bD}$ denotes the kernel of $\bar{\bG}^{\sc}\to \bar{\bG}^{\ss}$, then we have an exact sequence
$$1\to \bar{\bD}(\F_{\primepower})\to \bar{\bG}^{\sc}(\F_{\primepower})\to \bar{\bG}^{\ss}(\F_{\primepower})\to H^1(\F_{\primepower}, \bar{\bD}),$$
so we may assume $\bar{\bG} = \bar{\bG}^{\sc}$.
Then the assertion on composition factors and parts (i) and (ii) follow from \cite[Proposition 3.3.3]{Hui2}. Part (iii) follows easily from the definition.
\end{proof}

\begin{thm}
\label{finite}
Let $\bar{\bG}$ be a connected algebraic group over $\F_{\ell}$
and $\bar\Gamma\subset \bar{\bG}(\F_\ell)$ a subgroup. If $\ell$ is sufficiently large compared to $\dim\bar{\bG}$, then
the following hold:
\begin{enumerate}
\item[(i)] $\rk_\ell \bar \Gamma\le  \rk \bar{\bG}$.
\item[(ii)] If $\rk_\ell \bar \Gamma = \rk \bar{\bG}$ and $\bar{\bG}$ is simply connected and semisimple of type A, then 
$\bar\Gamma=\bar{\bG}(\F_{\ell})$.
\item[(iii)] If $\rk_\ell^{\LT}\bar\Gamma =\rk_\ell^{\LT} \bar{\bG}(\F_{\ell})$ for all Lie types $\LT$ and $\bar{\bG}$ is simply connected and semisimple, then $\bar\Gamma=\bar{\bG}(\F_{\ell})$.
\item[(iv)] If $\dim_\ell \bar\Gamma\geq \dim\bar{\bG}$, then $\bar{\bG}$ is semisimple and equality holds.
\end{enumerate}
\end{thm}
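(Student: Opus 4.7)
The strategy is to combine a Larsen--Pink style structure theorem with Proposition~\ref{points} and Lemma~\ref{A-lemma}. For $\ell$ sufficiently large compared to $\dim\bar{\bG}$, such a structure theorem (as developed in \cite{Hui2}) associates to any subgroup $\bar\Gamma\subseteq\bar{\bG}(\F_\ell)$ a normal subgroup $\bar\Gamma_0$ of index bounded in terms of $\dim\bar{\bG}$, together with a connected semisimple algebraic subgroup $\bar{\bH}\subseteq\bar{\bG}_{\bar{\F}_\ell}$ (the ``algebraic envelope'') such that $\bar\Gamma_0$ and $\bar{\bH}(\bar{\F}_\ell)$ share the same characteristic-$\ell$ Lie-type composition factors. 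Since $[\bar\Gamma:\bar\Gamma_0]$ is bounded while every finite simple group of Lie type in characteristic $\ell$ has order at least $\ell$, the functions $\dim_\ell$, $\rk_\ell$, and $\rk_\ell^\LT$ take the same values on $\bar\Gamma$ as on $\bar{\bH}$, so by Proposition~\ref{points} these are respectively $\dim\bar{\bH}$, $\rk\bar{\bH}$, and $m'_\LT\rk\LT$, where $m'_\LT$ counts the simple factors of $\bar{\bH}^{\sc}$ of type $\LT$.

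Part (i) then follows immediately from $\rk\bar{\bH}\le\rk\bar{\bG}$, since a maximal torus of the semisimple group $\bar{\bH}$ lies in some maximal torus of $\bar{\bG}$. For part (iv), the intersection $\bar{\bH}\cap R(\bar{\bG})$ is a solvable normal subgroup of the semisimple group $\bar{\bH}$, hence finite, so $\dim\bar{\bH}\le\dim\bar{\bG}^{\ss}\le\dim\bar{\bG}$; the assumption $\dim_\ell\bar\Gamma\ge\dim\bar{\bG}$ forces equality throughout, whence $R(\bar{\bG})$ is trivial (so $\bar{\bG}$ is semisimple) and $\bar{\bH}=\bar{\bG}$.

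For part (ii), the rank equality yields $\rk\bar{\bH}=\rk\bar{\bG}$, and since $\bar{\bG}$ is simply connected semisimple of type A, Lemma~\ref{A-lemma} applied to $\bar{\bH}\subseteq\bar{\bG}_{\bar{\F}_\ell}$ forces $\bar{\bH}=\bar{\bG}_{\bar{\F}_\ell}$. For part (iii), the type-wise hypothesis combined with Proposition~\ref{points}(i) gives $m'_\LT=m_\LT$ for every $\LT$ (where $m_\LT$ is the count for $\bar{\bG}^{\sc}$), so $\bar{\bH}^{\sc}$ and $\bar{\bG}^{\sc}$ have the same multiset of simple factors, whence $\dim\bar{\bH}=\dim\bar{\bG}$ and $\bar{\bH}=\bar{\bG}_{\bar{\F}_\ell}$ again. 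In both cases, the structure theorem then tells us $\bar\Gamma_0$ surjects modulo a central subgroup onto the product of non-abelian finite simple quotients of $\bar{\bG}(\F_\ell)$; using that $\bar{\bG}$ is simply connected and that these quotients are perfect for $\ell\ge 5$, this lifts to $\bar\Gamma_0=\bar{\bG}(\F_\ell)$, and the containment $\bar\Gamma_0\subseteq\bar\Gamma\subseteq\bar{\bG}(\F_\ell)$ completes the argument.

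The main technical obstacle is the precise bookkeeping in the Larsen--Pink structure theorem: tracking the bounded-index subgroup $\bar\Gamma_0$, absorbing possible Frobenius twists of $\bar{\bH}$ into the counting of its simple factors over $\bar{\F}_\ell$, and then executing the final rigidity step that promotes equality of algebraic envelopes $\bar{\bH}=\bar{\bG}_{\bar{\F}_\ell}$ to equality of finite groups $\bar\Gamma=\bar{\bG}(\F_\ell)$ in parts (ii) and (iii).
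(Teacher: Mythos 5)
Your proposal follows essentially the same strategy as the paper's proof: produce an ``algebraic envelope'' $\bar{\bH}$ of $\bar\Gamma$ (the paper uses Nori's exponentially generated groups; you invoke the related structure theory from \cite{Hui2}), compute $\rk_\ell$, $\rk^\LT_\ell$, $\dim_\ell$ of $\bar\Gamma$ in terms of $\bar{\bH}$ via Proposition~\ref{points}, compare $\bar{\bH}$ to $\bar{\bG}$, and apply Lemma~\ref{A-lemma} for part (ii). So this is not a genuinely different route.

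However, the step you flag as the ``main technical obstacle'' --- passing from $\bar{\bH}=\bar{\bG}_{\bar\F_\ell}$ to $\bar\Gamma=\bar{\bG}(\F_\ell)$ --- is precisely where your sketch is incomplete, and the paper does something more precise than what you propose. You argue ``$\bar\Gamma_0$ surjects modulo a central subgroup onto the product of non-abelian finite simple quotients of $\bar{\bG}(\F_\ell)$; using perfectness, this lifts.'' But matching composition factors does not by itself identify a subgroup, and the lifting claim is not obviously available from the structure theorem alone. The paper instead uses two sharper facts from Nori's machinery: (a) the equality $\bar{\bH}(\F_\ell)^+ = \bar\Gamma^+$ (Nori's Theorem~B) after replacing $\bar\Gamma$ by $\bar\Gamma^+$, and (b) Steinberg's theorem that a simply connected semisimple group over a finite field is generated by its unipotent elements, so $\bar{\bH}(\F_\ell)^+ = \bar{\bH}(\F_\ell) = \bar{\bG}(\F_\ell)$ once $\bar{\bH}$ is identified with the simply connected $\bar{\bG}$ (up to the paper's $\bar{\bG}^+$, which has the same $\F_\ell$-points). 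That combination, not merely perfectness, is what closes the argument. Also note that the paper does not claim $\bar{\bH}\subset\bar{\bG}$ directly; it compares $\bar{\bH}$ to a potentially different exponentially generated group $\bar{\bG}^+$ that is only isogenous to $\bar{\bG}$, and the paper first reduces to $\bar{\bG}=\bar{\bG}^{\sc}$ embedded in $\GL_k$ before constructing $\bar{\bH}$, a reduction you do not spell out. These are fillable gaps, but as written your final step does not go through without the Nori/Steinberg input the paper uses.
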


\begin{proof}
We recall some ideas of Nori \cite{Nori}.  Let $k$ be a positive integer.
If $\ell > k$ and $x\in M_k(\F_\ell)$ is nilpotent, the formula  
$$\phi_x(t) := \exp tx = 1 + tx + \frac{t^2}2 x^2 + \cdots + \frac{t^{k-1}}{(k-1)!} x^{k-1}$$ 
defines an $\F_\ell$-morphism
of algebraic groups $\phi_x\colon \A^1\to \GL_k$.  In particular, if $x\neq 0$, then $\phi_x(1)$ is of
order $\ell$.   Conversely, every element $u$ of order $\ell$ in $\GL_k(\F_\ell)$
is of the form $\exp(x)$ for some nilpotent $x\in M_k(\F_\ell)$, namely,
$$x = \log u := (u-1) - \frac{(u-1)^2}2 + \cdots + (-1)^k \frac{(u-1)^{k-1}}{k-1}.$$
A closed subgroup of $\GL_k$ over $\F_\ell$ is \emph{exponentially generated} if it is generated
by subgroups of the form $\phi_x(\A^1)$.   Since every quotient group of an exponentially generated group
is exponentially generated, every exponentially generated group is connected, every reductive exponentially generated group is semisimple, and more generally, every exponentially generated group is the extension of a semisimple group by a unipotent group.

For any subgroup $\bar\Delta\subset \GL_k(\F_\ell)$, we define $\bar\Delta^+$ to be the subgroup
of $\bar\Delta$ generated by elements of order $\ell$.  Clearly $\bar\Delta^+$ is a normal subgroup,
and by \cite[Theorem~C]{Nori}, $\bar\Delta/\bar\Delta^+$ contains an abelian normal subgroup of
index bounded by a constant depending only on $k$.  In particular, if $\ell$ is sufficiently large compared to $k$, as we assume henceforth, then
\begin{equation*}
\begin{split}
\rk_\ell^{\LT}\bar\Delta &= \rk_\ell^{\LT}\bar\Delta^+,\\  
\rk_\ell \bar\Delta &= \rk_\ell \bar\Delta^+,\\  
\dim_\ell \bar\Delta &= \dim_\ell \bar\Delta^+.
\end{split}
\end{equation*}
If $\bar{\bH}$ denotes the algebraic subgroup of $\GL_k$ generated
by $\phi_x(\A^1)$ as $\exp(x)$ ranges over all elements of order $\ell$ in $\bar\Delta$
we have $\bar{\bH}(\F_\ell)^+ = \bar\Delta^+$ \cite[Theorem~B]{Nori}.

If $\bar{\bS}/\F_\ell$ is a simply connected semisimple subgroup of $\GL_k$,
then $\bar{\bS}(\F_\ell)\subset \GL_k(\F_\ell)$ is generated by its unipotent elements \cite[Theorem~12.4]{Steinberg}, each of the form $\exp(x)$
for some nilpotent matrix $x$.  Let $\bar{\bS}^+$ denote the algebraic subgroup of $\GL_k$ generated by the corresponding $\phi_x(\A^1)$.
Assuming, as always, that $\ell$ is  sufficiently large, we have 
$$\bar{\bS}^+(\F_\ell)^+ = \bar{\bS}(\F_\ell)^+ = \bar{\bS}(\F_\ell)$$
 \cite[Theorem~B]{Nori}.  As the index $[\bar{\bS}^+(\F_\ell):\bar{\bS}^+(\F_\ell)^+]$ is uniformly bounded \cite[3.6(v)]{Nori}
and $\bar{\bS}^+$ and $\bar{\bS}$ are connected,
the estimates on the orders of $\bar{\bS}^+(\F_\ell)$ and $\bar{\bS}(\F_\ell)$ \cite[3.5]{Nori}  imply 
first that
$\dim \bar{\bS}^+ = \dim \bar{\bS}$ and then that
$\bar{\bS}^+(\F_\ell) = \bar{\bS}^+(\F_\ell) ^+ = \bar{\bS}(\F_\ell)$.  Now, $\bar{\bS}^+$ is exponentially generated and therefore
is the extension of  a semisimple group by a connected unipotent group $\bar{\mathbf{U}}$.  As $\bar{\bS}^+(\F_\ell)\cong \bar{\bS}(\F_\ell)$
has no solvable normal subgroups except subgroups of the center of $\bar{\bG}$ and as the order of this center is $\le k < \ell$, it follows that $\bar{\mathbf{U}}$ is trivial, so $\bar{\bS}^+$ is semisimple.
Because their groups of $\F_\ell$-points are the same, $\bar{\bS}$ and $\bar{\bS}^+$
belong to the same isogeny class.

Now we apply this theory to a subgroup $\bar \Gamma$ 
of $\bar{\bG}(\F_\ell)$ where $\bar{\bG}$ is a connected group over $\F_\ell$, and $\ell$ is still assumed sufficiently large in terms of $k$.
As the image  $\bar\Gamma^{\ss}$ of $\bar\Gamma$ in $\bar{\bG}^{\ss}(\F_\ell)$ is the quotient of $\bar\Gamma$
by a solvable group, we have 
\begin{equation*}
\begin{split}
\rk_\ell^{\LT} \bar\Gamma^{\ss} &= \rk_\ell^{\LT} \bar\Gamma, \\
\rk_\ell \bar\Gamma^{\ss} &= \rk_\ell \bar\Gamma, \\
\dim_\ell \bar\Gamma^{\ss} &= \dim_\ell \bar\Gamma,
\end{split}
\end{equation*}
so replacing
$\bar{\bG}$ and $\bar \Gamma$ by $\bar{\bG}^{\ss}$ and $\bar\Gamma^{\ss}$ respectively, we may assume without loss
of generality that $\bar{\bG}$ is semisimple.
Let $\bar \Gamma^{\sc}$ denote the pullback of $\bar\Gamma$ to $\bar{\bG}^{\sc}(\F_\ell)$.  As $\bar\Gamma$ contains 
as a subgroup of bounded index the quotient of $\bar \Gamma^{\sc}$ by a subgroup of bounded order, we have
\begin{equation*}
\begin{split}
\rk_\ell^{\LT} \bar\Gamma^{\sc} &= \rk_\ell^{\LT} \bar\Gamma, \\
\rk_\ell \bar\Gamma^{\sc} &= \rk_\ell \bar\Gamma, \\
\dim_\ell \bar\Gamma^{\sc} &= \dim_\ell \bar\Gamma.
\end{split}
\end{equation*}

By the classification of semisimple groups
and their representations, there exists a faithful representation $\bar{\bG}^{\sc}\hookrightarrow \GL_k$ defined over $\F_\ell$ for
some $k$ bounded in terms of $\rk \bar{\bG}$.
Let $\bar{\bH}$ and $\bar{\bG}^+$ denote respectively the $\F_\ell$-subgroup of $\GL_k$ generated by $\phi_x(\A^1)$
as $\exp x$ ranges over all order-$\ell$ elements of $\bar \Gamma^{\sc}$ and 
of $\bar{\bG}(\F_\ell)$.  For part (iv), we have by hypothesis and Proposition~\ref{points} (iii),
\begin{align*}
\dim_\ell(\bar\Gamma^{\sc})^+&=\dim_\ell\bar\Gamma^{\sc} = \dim_\ell \bar\Gamma \ge \dim \bar{\bG} \ge \dim \bar{\bG}^{\sc} \ge \dim \bar{\bH} \\
&\ge \dim \bar{\bH}^{\ss} =   \dim_\ell \bar{\bH}(\F_\ell) =  \dim_\ell \bar{\bH}(\F_\ell)^+ = \dim_\ell(\bar\Gamma^{\sc})^+.
\end{align*}
Thus, $\dim \bar{\bG} = \dim \bar{\bG}^{\sc}$, which implies that $\bar{\bG}$ is semisimple.

For parts (i)--(iii), replacing $\bar\Gamma$ and $\bar{\bG}$ by $\bar\Gamma^{\sc}$ and $\bar{\bG}^{\sc}$ respectively,
we may assume that $\bar{\bG}$ is a simply connected semisimple $\F_\ell$-subgroup of $\GL_k$, where $k$ is bounded in terms of $\dim\bar{\bG}$.
Replacing $\bar\Gamma$ by $\bar\Gamma^+$, we may further assume that 
$\bar{\bH}(\F_\ell)^+=\bar\Gamma$. 
Since $\bar{\bH}\subset \bar{\bG}^+$ by construction and $\bar{\bG}$ and $\bar{\bG}^+$ are of the same isogeny type,
we have 
\begin{equation}
\label{chain}
\rk_\ell\bar\Gamma = \rk_\ell \bar{\bH}(\F_\ell)= \rk \bar{\bH}\leq \rk\bar{\bG}^+=\rk \bar{\bG}
\end{equation}
This proves (i).
For (ii), $\rk_\ell \bar \Gamma = \rk \bar{\bG}$ and (\ref{chain}) imply
$$\rk \bar{\bH} = \rk\bar{\bG} = \rk\bar{\bG}^+.$$
We further assume
that $\bar{\bG}$ (and therefore $\bar{\bG}^+$) is of type A.   As $\bar{\bH}\subset \bar{\bG}^+$ have the same rank, by Lemma~\ref{A-lemma},
$\bar{\bH}= \bar{\bG}^+$.
Since $\bar{\bH}(\F_\ell) = \bar{\bG}^+(\F_\ell)= \bar{\bG}(\F_\ell)$ is generated by order $\ell$ elements, we conclude that 
$$
\bar \Gamma = \bar{\bH}(\F_\ell) = \bar{\bG}^+(\F_\ell) = \bar{\bG}(\F_\ell).$$

For any $\F_\ell$-semisimple group $\bar{\mathbf{S}}$, we denote by $\rk^{\LT}\bar{\mathbf{S}}$ the rank of the subgroup of $\bar{\mathbf{S}}\times_{\F_\ell}\bar\F_\ell$ generated by almost simple normal subgroups of type $\LT$. 
By Proposition \ref{points}(i), $\rk^{\LT}\bar{\mathbf{S}} = \rk^{\LT}_\ell\bar{\mathbf{S}}(\F_\ell)$.
By the hypothesis of part (iii), we obtain for all Lie types $\LT$  that 
\begin{align*}
\rk^{\LT} \bar{\bH}&=\rk_\ell^{\LT} \bar{\bH}(\F_\ell)=\rk_\ell^{\LT} \bar{\bH}(\F_\ell)^+=\rk_\ell^{\LT} \bar\Gamma^+ \\
&=\rk_\ell^{\LT} \bar\Gamma
=\rk^{\LT}_\ell \bar{\bG}(\F_\ell)=\rk^{\LT} \bar{\bG}=\rk^{\LT} \bar{\bG}^+.
\end{align*}
Since $\bar{\bG}^+$ is semisimple and $\bar{\bH}\subset\bar{\bG}^+$, this implies $\bar{\bH}=\bar{\bG}^+$ by dimension considerations.
As $\bar{\bG}$ is simply connected, it follows that
$$\bar \Gamma = \bar{\bH}(\F_\ell) = \bar{\bG}^+(\F_\ell) = \bar{\bG}(\F_\ell).$$


\end{proof}

\begin{cor}
\label{finite-cor}
Let $\bar{\bG}$ be a connected algebraic group over $\F_{\primepower}$
and $\bar\Gamma\subset \bar{\bG}(\F_{\primepower})$ a subgroup. If $\ell$ is sufficiently large compared to $\dim\bar{\bG}$ and $f$, then
the following hold:
\begin{enumerate}
\item[(i)] $\rk_\ell \bar \Gamma\le  f\rk \bar{\bG}$.
\item[(ii)] If $\rk_\ell \bar \Gamma = f\rk \bar{\bG}$ and $\bar{\bG}$ is simply connected and semisimple of type A, then 
$\bar\Gamma=\bar{\bG}(\F_{\primepower})$.
\item[(iii)] If $\rk_\ell^{\LT}\bar\Gamma =\rk_\ell^{\LT} \bar{\bG}(\F_{\primepower})$ for all Lie types $\LT$ and $\bar{\bG}$ is simply connected and semisimple, then $\bar\Gamma=\bar{\bG}(\F_{\primepower})$.
\item[(iv)] If $\dim_\ell \bar\Gamma\geq f\dim\bar{\bG}$, then $\bar{\bG}$ is semisimple and equality holds.
\end{enumerate}
\end{cor}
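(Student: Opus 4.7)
The plan is to reduce directly to Theorem~\ref{finite} by passing to the Weil restriction. Set $\bG' := \Res_{\F_{\ell^f}/\F_\ell}\bar{\bG}$; this is a connected algebraic group over $\F_\ell$ with $\bG'(\F_\ell) = \bar{\bG}(\F_{\ell^f})$, so $\bar\Gamma$ embeds naturally in $\bG'(\F_\ell)$. The hypothesis ``$\ell$ sufficiently large compared to $\dim\bar{\bG}$ and $f$'' is precisely ``$\ell$ sufficiently large compared to $\dim\bG' = f\dim\bar{\bG}$,'' which is the form needed to invoke Theorem~\ref{finite}.

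The main task is then to check that Weil restriction interacts well with the invariants appearing in the statement. After base change one has $\bG'\times_{\F_\ell}\bar\F_\ell \cong \prod_{\sigma\in\Gal(\F_{\ell^f}/\F_\ell)}\bar{\bG}^\sigma\times\bar\F_\ell$, and from this product decomposition I would read off each of the needed compatibilities: $\dim\bG' = f\dim\bar{\bG}$ and $\rk\bG' = f\rk\bar{\bG}$; the properties of being simply connected, semisimple, or of type A each transfer between $\bG'$ and $\bar{\bG}$; and the number of almost simple factors of $(\bG')^{\sc}\times\bar\F_\ell$ of type $\LT$ is $f m_\LT$, where $m_\LT$ is the corresponding count for $\bar{\bG}$. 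Feeding this last identity into Proposition~\ref{points}(i), applied to $\bG'$ over $\F_\ell$ and to $\bar{\bG}$ over $\F_{\ell^f}$, gives
\[ \rk_\ell^{\LT}\bG'(\F_\ell) \;=\; f m_\LT\,\rk\LT \;=\; \rk_\ell^{\LT}\bar{\bG}(\F_{\ell^f}). \]

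With this dictionary in place, each part of the corollary follows by applying the corresponding part of Theorem~\ref{finite} to $\bar\Gamma\subset\bG'(\F_\ell)$. Part (i) gives $\rk_\ell\bar\Gamma\le\rk\bG' = f\rk\bar{\bG}$; parts (ii) and (iii) yield $\bar\Gamma = \bG'(\F_\ell) = \bar{\bG}(\F_{\ell^f})$ once the rank hypotheses are transferred via the identifications above; and part (iv) forces $\bG'$ to be semisimple with $\dim_\ell\bar\Gamma = \dim\bG' = f\dim\bar{\bG}$, whereupon semisimplicity of $\bG'$ transfers back to $\bar{\bG}$.

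The only real obstacle is verifying the structural compatibilities of Weil restriction, in particular the identity $(\Res_{\F_{\ell^f}/\F_\ell}\bar{\bG})^{\sc} = \Res_{\F_{\ell^f}/\F_\ell}(\bar{\bG}^{\sc})$ and the preservation of the type A condition. I would handle these by noting that Weil restriction along a finite separable extension commutes with base change to $\bar\F_\ell$, so that all the required identities reduce to the elementary facts about Galois twists and products of groups.
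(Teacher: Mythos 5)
Your proposal is correct and is exactly the approach the paper takes: the paper's proof is the one-line observation that the corollary follows from Theorem~\ref{finite} applied to the Weil restriction $\Res_{\F_{\ell^f}/\F_\ell}\bar{\bG}$, and you have simply spelled out the compatibility checks (dimension, rank, type-rank via the factor count $m'_{\LT} = f m_{\LT}$, and preservation of semisimplicity, simple connectedness, and type A) that the paper leaves implicit.
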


\begin{proof}
This follows immediately from Theorem~\ref{finite} by replacing $\bar{\bG}$ with the Weil restriction of scalars $\Res_{\F_{\primepower}/\F_\ell} \bar{\bG}$.
\end{proof}

\begin{prop}
\label{split}
Let $F$ be a local field in characteristic zero. For any connected semisimple algebraic group $\bG /F$, there exists a finite extension $F'$ of $F$ such that $\bG $ splits over $F'$ and the degree $[F':F]$ is bounded by a constant that depends only on $\dim\bG$.
\end{prop}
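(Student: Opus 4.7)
The plan is to reduce the problem to bounding the order of a finite subgroup of $\GL_r(\Z)$, where $r = \rk \bG \le \dim \bG$. First I would invoke Grothendieck's theorem (Borel, \emph{Linear Algebraic Groups}, 18.2) that any connected reductive group over an infinite field admits a maximal torus defined over that field, and pick a maximal $F$-torus $T \subset \bG$. The continuous action of $\Gal(\bar F/F)$ on the character lattice $X^*(T) \cong \Z^r$ gives a homomorphism $\Gal(\bar F/F) \to \Aut(X^*(T)) = \GL_r(\Z)$, whose image is compact in a discrete group, hence finite. Let $F'$ be the fixed field of the kernel, so $F'/F$ is a finite Galois extension with $\Gal(F'/F) \hookrightarrow \GL_r(\Z)$.

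Over $F'$, the torus $T_{F'}$ is split and remains a maximal torus of $\bG_{F'}$, so $\bG_{F'}$ is a connected semisimple group containing a split maximal torus; by definition, this means $\bG$ splits over $F'$. It therefore suffices to bound $|\Gal(F'/F)|$, i.e.\ the order of a finite subgroup of $\GL_r(\Z)$, in terms of $r$. This is the classical theorem of Minkowski (refined by Feit and others): finite subgroups of $\GL_r(\Z)$ have orders uniformly bounded in $r$ — in fact bounded by $2^r r!$ outside a small list of exceptions, but any explicit bound suffices for us. Combining with $r \le \dim\bG$ yields the desired bound on $[F':F]$.

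The main (minor) obstacle is really just ensuring the existence of an $F$-rational maximal torus: that is what forces us to appeal to Grothendieck's theorem and uses infinitude of $F$ (automatic here, as $F$ is a local field of characteristic zero). Neither the local-field hypothesis nor the characteristic-zero hypothesis plays a deeper role — the argument goes through for any infinite field. Everything else is assembling standard facts (the equivalence between splitness of $\bG$ and the existence of a split maximal torus, and the Minkowski-type bound), so I do not anticipate any serious technical difficulty.
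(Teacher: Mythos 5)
Your argument is correct, and it takes a genuinely different route from the paper's. You split $\bG$ by splitting a maximal $F$-torus $T$ (which exists by Grothendieck's theorem), observing that the Galois action on $X^*(T)\cong\Z^r$ has image a finite subgroup of $\GL_r(\Z)$ whose order is bounded in terms of $r\le\dim\bG$ by Minkowski's theorem; a reductive group possessing a split maximal torus is split, which is either the definition or an immediate consequence of the uniqueness theorem for split reductive groups, so passing from the splitting of $T_{F'}$ to the splitting of $\bG_{F'}$ is legitimate. The paper instead works with non-abelian Galois cohomology: it represents $\bG$ (after reducing to $\bG^{\sc}$) by a class in $H^1(F,\Aut\bG^{\sp})$, kills its image in $H^1(F',\Out\bG^{\sp})$ over a bounded extension because $\Out\bG^{\sp}$ is a finite group of bounded order, and then kills the remaining inner class in $H^1(F',\bG^{\ad})$ by embedding it in $H^2(F',\bZ)$ via Kneser's vanishing theorem $H^1(F',\bG^{\sc})=0$ for $p$-adic $F'$, and finally appeals to the structure of the Brauer group of a local field. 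Your approach is both simpler in its ingredients and more general in scope — it works verbatim over any infinite field and invokes no local class field theory or $p$-adic vanishing theorems — while the paper's argument is tailored to the non-archimedean local setting. One small stylistic caveat: the parenthetical "$2^r r!$ outside a small list of exceptions" is really the Feit–Friedland refinement of Minkowski; for the proposition the crude Minkowski bound $M(r)$ already suffices, and citing that directly is cleaner.
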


\begin{proof}
For archimedan local fields, the proposition is trivial, so we assume $F$ is non-archimedean.
We may assume $\bG $ is simply connected. 
Then $\bG $ is an $F$-form of $\bG ^{\sp}$, which is semisimple, simply connected, and split over $F$. 
The form of $\bG $ is represented by a Galois cohomology class 
$$[c_\sigma]\in H^1(F,\mathrm{Aut}(\bG ^{\sp})).$$ 
We will find an extension $F'$ of $F$ such that the image of $[c_\sigma]$ under the restriction map
$$\mathrm{Res}:H^1(F,\mathrm{Aut}(\bG ^{\sp}))\rightarrow H^1(F',\mathrm{Aut}(\bG ^{\sp}))$$
is the trivial class. 

Let $\Dyn_{\bG}$ and $\bG ^{\ad}$ be respectively the Dynkin diagram  and the adjoint quotient of $\bG ^{\sp}$. 
The short exact sequence
$$1\to \Int \bG^{\sp}(\bar F)\to \Aut \bG^{\sp}(\bar F) \to \Out \bG^{\sp}(\bar F)\to 1$$
is split (\cite[Cor.~2.14]{Springer}), and we can identify the inner automorphism group $\Int \bG^{\sp}(\bar F)$ with $\bG ^{\ad}(\bar F)$
and $\Out \bG^{\sp}(\bar F)$ with $\Aut\Dyn_{\bG}$.
For any $F'$, the non-abelian cohomology sequence \cite[Chap. 1, Prop.~38]{GC} gives an exact sequence of pointed sets
$$1\rightarrow H^1(F',\bG ^{\ad})\stackrel{i}{\rightarrow} H^1(F',\mathrm{Aut}(\bG ^{\sp}))\stackrel{j}{\rightarrow} H^1(F',\mathrm{Aut}(\Dyn_{\bG}))$$

Since $\bG $ is semisimple and $\rk \bG $ is bounded in terms of $\dim \bG$, 
the size of $\mathrm{Aut}(\Dyn_{\bG})$ is bounded by a constant depending on $\dim \bG$. We can choose an extension $F'$ of $F$ of bounded degree such that the Galois action of $\mathrm{Gal}_{F'}$ on $\mathrm{Aut}(\Dyn_{\bG})$ is trivial 
and  $j(\mathrm{Res}([c_\sigma]))$ corresponds to the trivial homomorphism of $\mathrm{Hom}(\mathrm{Gal}_{F'},\mathrm{Aut}(\Dyn_{\bG}))=H^1(F',\mathrm{Aut}(\Dyn_{\bG}))$. 
Thus, $\mathrm{Res}([c_\sigma])$ is in the image of $i$ and we may identify it with an element of $H^1(F',\bG ^{\ad})$. 

Let $\mathbf{Z}$ be the center of $\bG^{\sp}$. 
As  $\bG^{\sp}$ is split, we can identify $\bZ$ with a product of factors of the form $\mu_{d_i}$.
We have a short exact sequence
$$1\rightarrow \mathbf{Z}\rightarrow \bG ^{\sp}\rightarrow \bG ^{\ad}\rightarrow 1.$$
The long exact sequence for central extensions in non-abelian cohomology (\cite[Chap.~1,~Prop.~43]{GC}) shows that
$$H^1(F',\bG ^{\sp})\rightarrow H^1(F',\bG ^{\ad})\rightarrow H^2(F',\mathbf{Z})$$
is exact. 
As $H^1(F',\bG ^{\sp})=0$ for every $\ell$-adic field $F'$ \cite[Chap. 3 $\mathsection 3$]{GC}, we have
$$H^1(F',\bG ^{\ad}) \subset H^2(F',\mathbf{Z}) =\bigoplus_i H^2(F',\mu_{d_i}) =  \bigoplus_i \mathrm{Br}(F')[d_i].$$
By \cite[Chap.~XIII,~Prop.~7]{LF}, the restriction map associated to any extension of $F'$ of degree divisible by each $d_i$ kills the 
class of any $H^1(F',\bG ^{\ad})$.

\end{proof}

Our basic result on $\rk_\ell$ for $\ell$-adic groups is the following.

\begin{thm}
\label{l-adic}
Let $\bG $ be a connected algebraic group over $\Q_\ell$ and
$\Gamma \subset \bG (\Q_\ell)$ a compact subgroup of $\bG (\Q_\ell)$. If $\ell$ is sufficiently large compared to $\rk \bG $, then the following hold:
\begin{enumerate}
\item[(i)] $\rk_\ell \Gamma\le \rk \bG $.
\item[(ii)] If $\rk_\ell \Gamma = \rk \bG $ and $\bG $ is simply connected and semisimple of type A,
then $\Gamma$ is a hyperspecial maximal compact subgroup of $\bG (\Q_\ell)$.
\end{enumerate}
\end{thm}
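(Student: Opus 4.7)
The plan is to reduce Theorem~\ref{l-adic} to its characteristic-$\ell$ analogue (Theorem~\ref{finite} and Corollary~\ref{finite-cor}) by passing to the reduction modulo $\ell$ of an integral model. First, I would perform reductions parallel to those used in Theorem~\ref{finite}: replace $\Gamma$ by $\Gamma\cap \bG^\circ(\Q_\ell)$ (finite index), pass to the quotient by the radical (which contributes a solvable piece, hence $\rk_\ell=0$), and lift to the preimage in $\bG^{\sc}(\Q_\ell)$. The connecting quotients are all either (pro-)solvable or finite of bounded order, so for $\ell$ large these operations preserve both $\rk \bG$ and $\rk_\ell \Gamma$. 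Thus we may assume $\bG$ is connected, simply connected, and semisimple.

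Next I would apply Proposition~\ref{split} to obtain a finite extension $F'/\Q_\ell$ of degree $f$ bounded in terms of $\rk \bG$ over which $\bG$ splits. Let $\cG/\calO_{F'}$ be the Chevalley integral model of $\bG_{F'}$, so that $\cG(\calO_{F'})$ is a hyperspecial maximal compact of $\bG(F')$. Since $\Gamma\subset \bG(\Q_\ell)\subset \bG(F')$ is compact, it is contained in a maximal bounded subgroup of $\bG(F')$; for type A (the relevant case for (ii)) every such subgroup is hyperspecial and conjugate to $\cG(\calO_{F'})$, so after a $\bG(F')$-conjugation we arrange $\Gamma\subset \cG(\calO_{F'})$. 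Reducing modulo the maximal ideal gives $\bar\Gamma\subset \cG(\F_{\ell^f})$; the reduction kernel is pro-$\ell$, so $\rk_\ell\Gamma = \rk_\ell\bar\Gamma$.

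For part (i), one wants $\rk_\ell\bar\Gamma \le \rk \bG$, but applying Corollary~\ref{finite-cor}(i) na\"ively over $\F_{\ell^f}$ yields only $\rk_\ell\bar\Gamma \le f\cdot \rk\cG = f\cdot \rk \bG$. The factor $f$ is removed by observing that $\bar\Gamma$ is not arbitrary in $\cG(\F_{\ell^f})$ but lies in the $\Gal(F'/\Q_\ell)$-fixed subgroup coming from $\bG(\Q_\ell)$, i.e., the $\F_\ell$-points of a smooth $\Z_\ell$-model of $\bG$ — for example $\Res_{\calO_{F'}/\Z_\ell}(\cG)$, or, when $\ell$ is large enough that $\bG$ is unramified, a direct Chevalley $\Z_\ell$-model. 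The special fiber of such a model is an $\F_\ell$-group of absolute rank $\rk \bG$, and Theorem~\ref{finite}(i) applied over $\F_\ell$ yields the sharp bound. For part (ii), under the equality $\rk_\ell\Gamma=\rk \bG$ and the type A hypothesis, Theorem~\ref{finite}(ii) (via the same $\Z_\ell$-model) forces $\bar\Gamma$ to equal the full group of $\F_\ell$-points of the special fiber. A pro-$\ell$ lifting argument — using that the higher congruence kernels are pro-$\ell$ with vanishing $H^1$ for $\bG^{\sc}(\F_\ell)$ acting on its Lie algebra once $\ell$ is large compared to $\rk \bG$ — upgrades this to $\Gamma$ being the full group of $\Z_\ell$-points of the smooth model, i.e., a hyperspecial maximal compact of $\bG(\Q_\ell)$.

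The main obstacle is the factor-of-$f$ tightening in part~(i): one has to identify precisely the $\F_\ell$-structure on the reduction inherited from $\Gamma\subset \bG(\Q_\ell)$, which requires knowing that $\bG$ admits a good $\Z_\ell$-model (so that one can work directly over $\F_\ell$ rather than over $\F_{\ell^f}$). Handling ramified $\bG$ and the pro-$\ell$ lifting step for (ii) together constitute the most delicate parts of the argument; both ultimately rest on the $\ell$-largeness hypothesis bounding the obstructions controlling the outer automorphisms and the congruence-kernel cohomology.
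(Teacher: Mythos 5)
Your overall strategy---reduce to $\bG$ simply connected semisimple, embed $\Gamma$ in a Bruhat--Tits parahoric over a splitting field, and reduce modulo $\ell$ so that Theorem~\ref{finite}/Corollary~\ref{finite-cor} can be applied---is aligned with the paper's, but there are two genuine gaps at exactly the points you flag as ``delicate.''

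The first gap is the assertion that the special fiber of the smooth $\Z_\ell$-model $\cH$ (the Bruhat--Tits group scheme over $\Z_\ell$ stabilizing the vertex fixed by $\Gamma$) has absolute rank $\rk\bG$. This is the crux of (i) and it is not automatic: a smooth affine $\Z_\ell$-group scheme with connected semisimple generic fiber need not have special fiber of rank $\le\rk\bG$, and in the ramified, non-hyperspecial case $\cH_{\F_\ell}$ will generally fail to be reductive. The alternative devices you offer do not repair this: $\Res_{\calO_{F'}/\Z_\ell}\cG$ has special fiber of rank $f\cdot\rk\bG$, not $\rk\bG$, and assuming ``$\ell$ large $\Rightarrow \bG$ unramified'' begs the question, since $\bG$ is a \emph{fixed} group over $\Q_\ell$ in this theorem (unramifiedness is only a consequence in the context of Theorem~\ref{main}). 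What the paper does instead is take $E$ to be the maximal unramified subfield of the splitting field $F$, note the containment $\cH(O_E)\subset\cG(O_F)$ coming from the BT functoriality in unramified extensions, and then run a chain: $\rk_\ell\cH(O_E)=\rk_\ell\cH(\F_{\primepower})=f\rk\cH_{\F_\ell}$ (Prop.~\ref{points}(ii), pro-$\ell$ reduction kernel), and $\rk_\ell r_0(\cH(O_E))\le f\rk\cG_{\F_{\primepower}}=f\rk\bG$ (Cor.~\ref{finite-cor}(i) applied inside the split hyperspecial model), which together yield $\rk\cH_{\F_\ell}\le\rk\bG$; a second application of Theorem~\ref{finite}(i) to $r_1(\Gamma)\subset\cH(\F_\ell)$ then gives (i). Your argument asserts this bound without any such mechanism.

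The second gap is in (ii): before invoking any lifting theorem you must establish that $\cH(\Z_\ell)$ is actually hyperspecial, i.e., that $\cH_{\F_\ell}$ is semisimple. The paper derives this from equality in the rank chain, which forces $r_0(\cH(O_E))=\cG(\F_{\primepower})$ by Cor.~\ref{finite-cor}(ii), whence $\dim_\ell\cH(\F_{\primepower})=f\dim\cG_{\F_{\primepower}}=f\dim\cH_{\F_\ell}$, and then Cor.~\ref{finite-cor}(iv) forces $\cH_{\F_\ell}$ to be semisimple. Your proposal never produces the $\ell$-dimension equality that triggers part~(iv), so it does not explain why the model is hyperspecial (equivalently, why $\bG$ turns out to be unramified). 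Once that is in hand, showing $\Gamma=\cH(\Z_\ell)$ is indeed a lifting statement, which the paper handles by citing Vasiu, roughly as you suggest. Two smaller points: maximal compacts of $\SL_n(F')$ are all hyperspecial but not $\bG(F')$-conjugate (only $\bG^{\ad}(F')$-conjugate), so ``after a $\bG(F')$-conjugation'' should be adjusted; and the ``$\Gal(F'/\Q_\ell)$-fixed subgroup'' heuristic does not literally make sense when $F'/\Q_\ell$ is ramified, which is precisely the case that needs care.
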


\begin{proof}
Replacing $\bG $ and $\Gamma$ by $\bG ^{\sc}$ and $\Gamma^{\sc}$ do not change ranks,
so we may in any case assume that $\bG $ is simply connected and semisimple. 
By \cite[3.2]{Tits}, if $F$ is a finite extension of $\Q_\ell$, 
every maximal compact subgroup of $\bG (F)$ is the stabilizer $\bG (F)^x$ of a vertex $x$  in 
the Bruhat-Tits building of $\bG /F$.   
There exists a smooth affine group scheme $\cG$ over the ring of integers $O_F$ of $F$
and an isomorphism $\iota$ from the generic fiber of $\cG$ to $\bG _{F}$ 
such that $\iota(\cG(O_F)) = \bG (F)^x$ \cite[3.4.1]{Tits}. 
Let $\F_{\primepower}$ be the residue field of $O_F$. 
As $\bG $ is simply connected, 
the special fiber $\cG_{\F_{\primepower}}$ is connected \cite[3.5.2]{Tits}. 
The maximal compact subgroup is hyperspecial if and only if $\cG_{\F_{\primepower}}$ is reductive \cite[3.8.1]{Tits}, 
in which case it has the same root datum as the generic fiber \cite[XXII, 2.8]{SGA3}. 
The completion of the smooth group scheme $\cG$ along the identity gives a formal group law
whose elements identify with the kernel of 
$$r\colon \cG(O_F)\to \cG(\F_{\primepower})$$ 
\cite[1.2]{Serre64}, so 
$\ker r$ is a pro-$\ell$ group \cite[III, \S7, Prop.~5]{Bo}.
Since $r$ is surjective by Hensel's lemma, we obtain
$$\rk_\ell\cG(O_F)= \rk_\ell \cG(\F_{\primepower}).$$

If $\Gamma$ is a compact subgroup of $\bG (\Q_\ell)$, 
it fixes some vertex $x$ in the Bruhat-Tits building of $\bG /\Q_\ell$.
By applying Proposition \ref{split} and \cite[2.4]{Larsen} to $\bG /\Q_\ell$,  
there exists a finite extension $F/\Q_\ell$ whose residue class degree $f$ is bounded by a constant $C$ depending on $\rk \bG $ such that 
$\bG $ splits over $F$ and the vertex $x$ becomes hyperspecial in the Bruhat-Tits building of $\bG /F$. 
Hence, $\Gamma$ is contained in a hyperspecial maximal subgroup of $\bG (F)$, 
which we identify with $\cG(O_F)$. 

By Bruhat-Tits theory \cite[3.4.3]{Tits}, there exists a smooth group scheme $\cH$ over $\Z_\ell$
with generic fiber $\bG$ such that for every unramified finite extension $E/\Q_\ell$,
the stabilizer in $\bG(E)$  of the image of $x$ under the map of buildings $B(\bG,\Q_\ell)\to B(\bG,E)$
is $\cH(O_E)$.

Let $E$ denote the maximal subfield of $F$ unramified over $\Q_\ell$. 
Let $\F_{\primepower}$ be the residue field of $O_E$, which is also the residue field of $O_F$. 
Define surjections
\begin{align*}
r_0&\colon \cG(O_F)\to\cG(\F_{\primepower}) \\
r_1&\colon\cH(\Z_\ell)\to\cH(\F_\ell).
\end{align*}
Since we have $\cH(O_E)\subset \cG(O_F)$ and $\Gamma\subset \cH(\Z_\ell),$
we obtain by Corollary~\ref{finite-cor}(i) and Proposition~\ref{points}(ii) and 
 that 
\begin{equation}
\label{star}
\begin{split}
 f\rk \bG &=f\rk\cG_{\F_{\primepower}}  \geq \rk_\ell r_0(\cH(O_E)) =\rk_\ell\cH(O_E)=\rk_\ell\cH(\F_{\primepower}) \\
               &= f\rk \cH_{\F_{\primepower}} =f\rk \cH_{\F_\ell}  \geq f\rk_\ell r_1(\Gamma) =f\rk_\ell \Gamma
\end{split}
\end{equation}
for all sufficiently large $\ell$. Assertion (i) follows immediately.

For (ii), since $\rk_\ell\Gamma=\rk \bG $,  equality must hold in both inequalities in (\ref{star}). 
Since 
$$r_0(\cH(O_E))\subset \cG(\F_{\primepower})=\cG_{\F_{\primepower}} (\F_{\primepower}),$$
$\cG_{\F_{\primepower}}$ is of type A, and 
$f$ is bounded by a constant depending only on $\dim G$,  by Corollary \ref{finite-cor}(ii),
$r_0(\cH(O_E))=\cG(\F_{\primepower})$, assuming, as always, that $\ell$ is sufficiently large compared to $\rk \bG $. 
Since $\cH(\F_{\primepower})$ is also a quotient of $\cH(O_E)$ by a pro-$\ell$ group, 
we conclude that the composition factors of $\cH(\F_{\primepower})$ and $\cG(\F_{\primepower})$ are identical modulo cyclic groups.

Since $\cG_{\F_{\primepower}}$ is semisimple, we have 
$$\dim_\ell \cH(\F_{\primepower})=\dim_\ell \cG(\F_{\primepower})=f\dim \cG_{\F_{\primepower}}$$
by Proposition \ref{points}(iii). 
Since $\cG$ and $\cH$ are flat, this means
$$\dim_\ell \cH(\F_{\primepower})=f\dim \cG_{\F_{\primepower}}=f \dim\bG = f\dim \cH_{\F_\ell}.$$
As the special fiber of $\cH$ is connected (\cite[3.5.3]{Tits}), we can apply Corollary~\ref{finite-cor}(iv) to $\bar\Gamma := \cH(\F_{\primepower})$
and $\bar{\bG} := \cH_{\F_\ell}$
and deduce that $\cH_{\F_{\ell}}$ is semisimple.
This implies $\cH(\Z_\ell)$ is a hyperspecial maximal compact subgroup of $\bG (\Q_\ell)$ containing $\Gamma$.
Hence, $\cH_{\F_\ell}$ is a connected, simply connected and semisimple type A $\F_\ell$-group. 
As $r_1(\Gamma)$ and $\cH_{\F_\ell}$ both have $\ell$-rank equal to $\rk \bG $,  by Corollary~\ref{finite-cor}(ii),
$$r_1(\Gamma)=\cH_{\F_\ell}(\F_\ell)=\cH(\F_\ell).$$
By the main theorem of \cite{Vasiu}, this implies
$$\Gamma = \cH(\Z_\ell).$$

\end{proof}

We can now prove Theorem~\ref{main}.

\begin{proof}
We observe first that if $\mathbf{L}$ is a connected algebraic group and $\mathbf{D}$ is a normal subgroup of $\mathbf{L}$
then $\mathbf{D}/\mathbf{D}^\circ$ is solvable.  Indeed, if $\mathbf{R}$ is the maximal normal solvable subgroup of $\mathbf{L}$,
every subquotient of $\mathbf{R}$ is again solvable, so it suffices to prove that
$$\im(\mathbf{D}\to \mathbf{L}/\mathbf{R})/\im(\mathbf{D}^\circ\to \mathbf{L}/\mathbf{R})$$
is solvable.  It therefore suffices to prove that the component group of every normal
subgroup of $\mathbf{L}/\mathbf{R}$ is solvable.  However, $\mathbf{L}/\mathbf{R}$ is adjoint semisimple, so every normal subgroup of $\mathbf{L}/\mathbf{R}$ is 
likewise adjoint semisimple and, in particular, connected.

Now, let $W_\ell := H^i(\bar X,\Q_\ell)$.  Let $\Lambda_\ell$ denote the image of $\mathrm{Gal}_K$ in $\GL(W_\ell)$
and $\mathbf{L}_\ell$ denote the Zariski-closure of $\Lambda_\ell$.  By  \cite[6.14]{LP},
the order of the component group $\mathbf{L}_\ell/\mathbf{L}^\circ_\ell$ does not depend on $\ell$.
Therefore, replacing $K$ by a finite extension, we may and do assume that $\mathbf{L}_\ell$ is connected for
all $\ell$.
For each $\ell$, we suppose chosen a $\mathrm{Gal}_K$-stable subspace $V_\ell$ such that the resulting system of
Galois representations is compatible in the sense of Serre.
Let $\Gamma_\ell$ denote the image
of $\mathrm{Gal}_K$ in $\GL(V_\ell)$.
Let $\Delta_\ell$ denote the kernel of the surjective homomorphism $\Lambda_\ell\to \Gamma_\ell$.
Let $\bG _\ell$ and $\mathbf{D}_\ell$ denote the Zariski-closures of $\Gamma_\ell$ and $\Delta_\ell$
respectively.  As $\mathbf{L}_\ell$ is connected, the same is true of its quotient $\bG _\ell$.
As $\mathbf{D}_\ell/\mathbf{D}_\ell^\circ$ is solvable, the total $\ell$-rank of $\Delta_\ell$ is the same as that
of $\Delta_\ell^\circ := \Delta_\ell\cap \mathbf{D}_\ell^\circ(\Q_\ell)$.

By \cite[Theorem A]{Hui2}, for $\ell$ is sufficiently large, $\rk_\ell\Lambda_\ell = \rk \mathbf{L}_\ell$.
As $\ker(\bG _\ell \rightarrow \mathbf{L}_\ell)^\circ =\mathbf{D}_\ell^\circ$, by part (i) of Theorem~\ref{l-adic},
$$\rk_\ell \Gamma_\ell = \rk_\ell \Lambda_\ell - \rk_\ell \Delta_\ell = \rk \mathbf{L}_\ell - \rk_\ell \Delta^\circ_\ell 
\ge \rk \mathbf{L}_\ell - \rk \mathbf{D}^\circ_\ell = \rk \bG _\ell.$$
We obtain
$$\rk_\ell\Gamma_\ell^{\sc}=\rk_\ell \Gamma_\ell \ge \rk \bG _\ell = \rk \bG _\ell^{\sc}.$$

By Theorem~\ref{l-adic}, $\Gamma_\ell^{\sc}$ is a hyperspecial maximal compact in $\bG _\ell^{\sc}(\Q_\ell)$. Since a reductive group $\bG $ over an $\ell$-adic field $F$ is unramified if and only if $\bG (F)$ contains a hyperspecial maximal compact subgroup \cite[$\mathsection1$]{Milne}, $\bG _\ell^{\sc}$ is unramified over $\Q_\ell$.
\end{proof}

\section{Some Applications and Extensions}

\begin{cor}
\label{high-rank}
If, for some $\ell_0$, the group obtained from $\bG _{\ell_0}^{\sc}$ by extension of scalars to 
$\bar\Q_{\ell_0}$ is a product of groups of the form $\SL_{m_i,\bar\Q_{\ell_0}}$,
where each $m_i$ satisfies $m_i=5,7$ or $m_i\ge 10$ and at most one $m_i=5$, then $\Gamma^{\sc}_\ell$ is
a hyperspecial maximal compact subgroup of $\bG _\ell^{\sc}(\Q_\ell)$ for all $\ell$ sufficiently large.
\end{cor}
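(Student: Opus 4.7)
The plan is to reduce Corollary~\ref{high-rank} to Theorem~\ref{main} by showing that the type~A hypothesis on $\bG_\ell$ propagates from the single prime $\ell_0$ to all sufficiently large primes $\ell$, given the combinatorial restrictions on the factors $\SL_{m_i}$.

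First I would invoke the $\ell$-independence machinery available for compatible systems of Galois representations coming from geometry. The total $\ell$-rank $\rk_\ell\Gamma_\ell$ equals $\rk\bG_\ell$ for large~$\ell$ via Theorem~\ref{l-adic} combined with \cite[Theorem~A]{Hui2} (already used in the proof of Theorem~\ref{main}), so $\rk\bG_\ell$ is $\ell$-independent. Pushing further, results of Larsen--Pink and their refinements give $\ell$-independence of $\dim\bG_\ell^{\ss}$ and of the formal character (the multiset of weights counted with multiplicity) of $V_\ell$ viewed as a representation of $\bG_\ell^{\sc}$. The hypothesis at $\ell_0$ identifies this formal character with that of a faithful representation of $\prod_i \SL_{m_i}$.

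Second, the combinatorial conditions $m_i\in\{5,7\}\cup\{m\geq 10\}$ together with ``at most one $m_i=5$'' are tailored precisely to rule out Larsen--Pink style coincidences between the formal character data of $\prod_i\SL_{m_i}$ and that of a semisimple group carrying a simple factor of type other than~$A$. Omitting $m_i=2,3,4$ removes confusions arising from the exceptional isomorphisms $A_1\cong B_1\cong C_1$ and $A_3\cong D_3$, together with the low-rank coincidences these induce in products; omitting $m_i=6,8,9$ and allowing at most one $m_i=5$ kills the remaining products of small $A_n$'s whose dimension data happen to match a non-type-A configuration. Once all such coincidences are excluded, the $\ell$-independent character data force $\bG_\ell^{\sc}$ to be a product of $\SL_{n_j}$'s over $\bar\Q_\ell$; consequently $\bG_\ell$ is of type~A for all sufficiently large $\ell$.

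With $\bG_\ell$ of type~A established, Theorem~\ref{main} immediately yields that $\Gamma_\ell^{\sc}$ is a hyperspecial maximal compact subgroup of $\bG_\ell^{\sc}(\Q_\ell)$ for all large $\ell$, as required. The main obstacle I would anticipate is the second step: carrying out the case analysis that shows the stated arithmetic conditions on $\{m_i\}$ are sharp enough to rule out \emph{every} non-type-A semisimple candidate with matching formal character. This is essentially a bookkeeping problem in the classification of simple algebraic groups and their faithful representations, but the choices ``$m_i=5,7$ or $m_i\ge 10$'' and ``at most one $m_i=5$'' are engineered to fit the known list of Larsen--Pink coincidences exactly, so the argument should be a finite check rather than the introduction of new ideas.
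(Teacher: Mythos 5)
Your overall strategy matches the paper's exactly: show that the type~A condition on $\bG_\ell$ propagates from $\ell_0$ to all sufficiently large $\ell$, then invoke Theorem~\ref{main}. The paper's actual proof is a one-line citation: by \cite{Hui1}, if $\bG_{\ell_0}$ is of type~A with every simple factor having rank in $\{4,6\}\cup[9,\infty)$ and at most one factor of rank~$4$ (which is precisely the $m_i=5,7$ or $m_i\ge 10$, at most one $m_i=5$ condition, since $\SL_m$ has rank $m-1$), then $\bG_\ell$ is of type~A for all $\ell$. What you have sketched is a plausible outline of what \cite{Hui1} proves; you are not giving a new argument but rather expanding the citation.

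Two cautions on the sketch itself. First, the exceptional isomorphisms $A_1\cong B_1\cong C_1$ and $A_3\cong D_3$ do not by themselves threaten type~A-ness --- a group of type $D_3$ \emph{is} of type $A_3$ --- so these are not the coincidences that force the exclusion of $m_i=2,3,4$; the dangerous coincidences are things like a rank-$2$ factor being $B_2=C_2$ rather than $A_1\times A_1$ or $A_2$, and similar genuinely non-type-A matches at higher rank. Second, you describe the mechanism as ruling out matches of ``formal character data,'' whereas \cite{Hui1} is framed around $\ell$-independence of rank together with an analysis of equal-rank reductive subalgebras of $\mathfrak{gl}_n$; these are related but not identical angles, and your proposal does not actually verify that the stated conditions on $\{m_i\}$ exclude \emph{every} problematic configuration --- you explicitly defer that to a ``finite check.'' Since the paper likewise defers to \cite{Hui1}, this is acceptable at the level of a proof sketch, but you should be aware that the substance of the corollary lives in that reference, not in the reduction.
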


\begin{proof}

By \cite{Hui1}, if $\bG _{\ell_0}$ is of type A, with the rank of every factor
belonging to the set $\{4,6\}\cup [9,\infty)$ and at most one factor of rank $4$, then $\bG _\ell$ is  of type A for all $\ell$.
\end{proof}

\begin{cor}
\label{global-A}
Let $\bG /\Q$ be a type A subgroup of $\GL_n$.  Suppose
$\Gamma_\ell \subset \bG (\Q_\ell)$ for all $\ell$, and $\Gamma_{\ell_0}$ is Zariski-dense in $\bG _{\Q_{\ell_0}}=\bG \times_\Q \Q_{\ell_0}$ 
for some $\ell_0$.  Then for all
$\ell$ sufficiently large, $\Gamma_\ell^{\sc}$ is a hyperspecial maximal subgroup of $\bG ^{\sc}(\Q_\ell)$.
\end{cor}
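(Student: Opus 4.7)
The plan is to reduce the corollary to Theorem~\ref{main} by establishing that, for all sufficiently large $\ell$, the Zariski closure $\bG_\ell$ of $\Gamma_\ell$ in $\GL_{n,\Q_\ell}$ satisfies $\bG_\ell^{\sc} \cong \bG^{\sc}_{\Q_\ell}$. Once this is in hand, $\bG_\ell$ is of type~A (inherited from $\bG$), and Theorem~\ref{main} applied to the compatible system $\{\rho_\ell\}$ directly yields that $\Gamma_\ell^{\sc}$ is a hyperspecial maximal compact subgroup of $\bG_\ell^{\sc}(\Q_\ell) = \bG^{\sc}(\Q_\ell)$.

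The first ingredient is rank constancy across $\ell$ for the Serre-compatible system $\{\rho_\ell\}$: by Serre's classical independence-of-$\ell$ results, together with \cite[Theorem~A]{Hui2} (as invoked in the proof of Theorem~\ref{main}), the semisimple rank $\rk \bG_\ell$ is independent of $\ell$ for all $\ell$ sufficiently large. Combined with the Zariski-density hypothesis $\bG_{\ell_0} = \bG_{\Q_{\ell_0}}$, this yields $\rk \bG_\ell = \rk \bG_{\Q_\ell}$ for all such $\ell$.

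The second step is an application of Lemma~\ref{A-lemma}. Let $\bH_\ell$ denote the image of $\bG_\ell^\circ$ under the composition $\bG_\ell^\circ \hookrightarrow \bG^\circ_{\Q_\ell} \twoheadrightarrow \bG^{\ss}_{\Q_\ell}$; its kernel $\bG_\ell^\circ \cap R(\bG^\circ_{\Q_\ell})$ is normal in $\bG_\ell^\circ$ and solvable. A general fact in characteristic zero says that a surjection of connected algebraic groups with solvable kernel induces an isomorphism of semisimple quotients (one checks that the preimage of $R(\bH_\ell)$ is a connected normal solvable subgroup of $\bG_\ell^\circ$, hence contained in $R(\bG_\ell^\circ)$), so $\bG_\ell^{\ss} \xrightarrow{\sim} \bH_\ell^{\ss}$ and hence $\rk \bH_\ell = \rk \bG_\ell = \rk \bG^{\ss}_{\Q_\ell}$. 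Since $\bG^{\ss}_{\Q_\ell}$ is connected semisimple of type~A and $\bH_\ell$ is a closed connected subgroup of the same rank, Lemma~\ref{A-lemma} forces $\bH_\ell = \bG^{\ss}_{\Q_\ell}$. Taking simply connected covers then identifies $\bG_\ell^{\sc}$ with $\bG^{\sc}_{\Q_\ell}$, and this identification is induced by the inclusion $\bG_\ell \hookrightarrow \bG_{\Q_\ell}$.

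The main obstacle in this plan is the rank-constancy input: it is standard for Serre-compatible systems but must be sourced carefully from the literature. The remainder is formal manipulation of semisimple quotients via Lemma~\ref{A-lemma}, followed by a direct appeal to Theorem~\ref{main}.
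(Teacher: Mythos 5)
Your proposal is correct and follows essentially the same path as the paper's proof: rank constancy of $\rk\bG_\ell$ across $\ell$, combined with the Zariski-density hypothesis at $\ell_0$, forces $\rk\bG_\ell = \rk\bG$ for all large $\ell$; Lemma~\ref{A-lemma} then identifies $\bG_\ell^{\sc}$ with $\bG^{\sc}_{\Q_\ell}$, after which Theorem~\ref{main} finishes. The one point to fix is the attribution for rank constancy: the paper cites \cite{Hui1} for the $\ell$-independence of $\rk\bG_\ell$, whereas \cite[Theorem~A]{Hui2} (as used in Theorem~\ref{main}) is the different statement $\rk_\ell\Lambda_\ell = \rk\mathbf{L}_\ell$, relating the $\ell$-rank of the image to the rank of its Zariski closure at a single $\ell$, and does not by itself yield constancy across $\ell$. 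Your expanded treatment of why $\bG_\ell \subset \bG_{\Q_\ell}$ of full rank forces equality of semisimple quotients (via the image $\bH_\ell$ in $\bG^{\ss}_{\Q_\ell}$) usefully makes explicit what the paper states in one line.
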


\begin{proof}
By \cite{Hui1}, $\rk \bG _\ell$ is independent of $\ell$ and therefore
always equal to 
$$\rk \bG _{\ell_0} = \rk \bG _{\Q_{\ell_0}} = \rk \bG .$$
As $\bG _\ell\subset \bG _{\Q_\ell}$,
Lemma \ref{A-lemma} implies $\bG _\ell^{\ss} = \bG _{\Q_\ell}^{\ss}$, so $\bG _\ell$ is of type A for all $\ell$.
The corollary now follows from Theorem~\ref{main}.
\end{proof}


A variant of this result is the following:

\begin{cor}
\label{concrete}
Suppose that in Corollary~\ref{global-A} we know in addition that $\bG $ is connected reductive with
simply connected derived group $\mathbf{D}$ and $\Gamma_\ell\subset \bG (\Q_\ell)\cap \GL_n(\Z_\ell)$ for all $\ell$.  Then
for all $\ell\gg1$, we have
$$ \Gamma_\ell \cap \mathbf{D}(\Q_\ell) = \GL_n(\Z_\ell)\cap \mathbf{D}(\Q_\ell).$$
\end{cor}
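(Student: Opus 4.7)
My plan is to reduce the problem to a single application of Theorem~\ref{l-adic}(ii), with ambient algebraic group $\mathbf{D}_{\Q_\ell}$ and compact subgroup $\Gamma'_\ell := \Gamma_\ell \cap \mathbf{D}(\Q_\ell)$. If that theorem yields that $\Gamma'_\ell$ is a hyperspecial maximal compact subgroup of $\mathbf{D}(\Q_\ell)$, the conclusion follows immediately: the hypothesis $\Gamma_\ell \subset \GL_n(\Z_\ell)$ forces
$$\Gamma'_\ell \subset \GL_n(\Z_\ell) \cap \mathbf{D}(\Q_\ell),$$
and since the right-hand side is itself a compact subgroup of $\mathbf{D}(\Q_\ell)$, maximality of $\Gamma'_\ell$ as a compact subgroup forces the inclusion to be an equality.

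The structural hypotheses on $\mathbf{D}_{\Q_\ell}$ demanded by Theorem~\ref{l-adic}(ii) are essentially built into the setup: simple connectedness and semisimplicity of $\mathbf{D}$ are assumed outright, and $\mathbf{D}$ is of type~A because $\mathbf{D} \simeq \bG^{\sc}$ (being a simply connected isogeny cover of $\bG^{\ss}$) while $\bG$ is of type~A by hypothesis. The real content to check is the rank equality $\rk_\ell \Gamma'_\ell = \rk \mathbf{D}$. For this, I would first observe that since $\bG$ is connected reductive the quotient $\bG/\mathbf{D} =: T$ is a torus, and the projection $\bG \to T$, whose kernel on $\Q_\ell$-points is exactly $\mathbf{D}(\Q_\ell)$, yields an embedding $\Gamma_\ell/\Gamma'_\ell \hookrightarrow T(\Q_\ell)$. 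The image, being a compact subgroup of the abelian $\ell$-adic Lie group $T(\Q_\ell)$, is profinite abelian and hence pro-solvable, so $\rk_\ell(\Gamma_\ell/\Gamma'_\ell) = 0$. Additivity of $\rk_\ell$ on short exact sequences then gives $\rk_\ell \Gamma'_\ell = \rk_\ell \Gamma_\ell$, and the proof of Corollary~\ref{global-A} (which combines Theorem~\ref{main} with Theorem~\ref{l-adic}(i)) supplies $\rk_\ell \Gamma_\ell = \rk \bG_\ell = \rk \bG = \rk \mathbf{D}$ for $\ell \gg 1$, as required.

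I do not anticipate a serious obstacle: once the rank equality is in hand, Theorem~\ref{l-adic}(ii) does the substantive work, and the concluding step that promotes $\Gamma'_\ell$'s hyperspecial maximality into the identity $\Gamma_\ell \cap \mathbf{D}(\Q_\ell) = \GL_n(\Z_\ell) \cap \mathbf{D}(\Q_\ell)$ is purely formal. The one delicate point is confirming that $\rk_\ell(\Gamma_\ell/\Gamma'_\ell) = 0$, which reduces to the standard fact that compact subgroups of the group of $\Q_\ell$-points of a $\Q_\ell$-torus are profinite abelian.
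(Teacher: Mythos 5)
Your proof is correct and takes a genuinely different route from the paper's. You apply Theorem~\ref{l-adic}(ii) directly to $\Gamma'_\ell := \Gamma_\ell \cap \mathbf{D}(\Q_\ell)$ viewed as a compact subgroup of $\mathbf{D}(\Q_\ell)$. The verification of the rank hypothesis goes through: $\Gamma_\ell / \Gamma'_\ell$ embeds into $(\bG/\mathbf{D})(\Q_\ell)$, an abelian group, so it is pro-abelian and hence $\rk_\ell(\Gamma_\ell/\Gamma'_\ell) = 0$; additivity of $\rk_\ell$ then gives $\rk_\ell \Gamma'_\ell = \rk_\ell \Gamma_\ell$, and the proofs of Theorem~\ref{main} and Corollary~\ref{global-A} supply $\rk_\ell\Gamma_\ell = \rk\bG_\ell = \rk\bG = \rk\mathbf{D}$ for $\ell\gg1$. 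The identifications $\mathbf{D}\cong\bG^{\sc}$ (a simply connected isogeny cover of $\bG^{\ss}$) and the consequent type-A property are also correct. Maximality of $\Gamma'_\ell$ then collapses the inclusion $\Gamma'_\ell\subset\GL_n(\Z_\ell)\cap\mathbf{D}(\Q_\ell)$ to an equality.

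The paper instead works with the commutator subgroup $[\Gamma_\ell,\Gamma_\ell]$, which sits inside $\Gamma_\ell\cap\mathbf{D}(\Q_\ell)$, and shows it equals $[\Gamma_\ell^{\sc},\Gamma_\ell^{\sc}]$. It then invokes Corollary~\ref{global-A} (that $\Gamma_\ell^{\sc}$ is hyperspecial maximal compact) and proves $\Gamma_\ell^{\sc}$ is perfect for $\ell\gg1$ via Vasiu's theorem, collapsing the whole chain $[\Gamma_\ell,\Gamma_\ell]\subset\Gamma_\ell\cap\mathbf{D}(\Q_\ell)\subset\GL_n(\Z_\ell)\cap\mathbf{D}(\Q_\ell)$ to equalities. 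Your route avoids the explicit perfectness argument (though Vasiu's theorem is still used implicitly inside Theorem~\ref{l-adic}(ii)) and is somewhat more streamlined, using instead the additivity of $\rk_\ell$ and the elementary vanishing of $\rk_\ell$ on abelian profinite groups. The paper's proof yields slightly more as a by-product -- it identifies the common group with the commutator subgroup $[\Gamma_\ell,\Gamma_\ell]$, a fact used in the Remark that follows the corollary -- whereas yours establishes only the stated equality.
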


\begin{proof}
Clearly
$$[\Gamma_\ell,\Gamma_\ell]\subset \Gamma_\ell \cap \mathbf{D}(\Q_\ell) \subset \GL_n(\Z_\ell)\cap \mathbf{D}(\Q_\ell),$$
so it suffices to prove that $[\Gamma_\ell,\Gamma_\ell]$ is a maximal compact subgroup of $\mathbf{D}(\Q_\ell)$.
As $\mathbf{D}$ is simply connected, the same is true for $\mathbf{D}_{\Q_\ell}$, and it follows that $\bG _{\Q_\ell}^{\sc} = \mathbf{D}_{\Q_\ell}$.
By Corollary~\ref{global-A}, $\bG _\ell^{\sc} = \mathbf{D}_{\Q_\ell}$.
If $x,y\in \Gamma_\ell\subset \bG (\Q_\ell)$ have the same image in $\bG ^{\ss}(\Q_\ell)$ as $x',y'\in \mathbf{D}(\Q_\ell)$ respectively,
then $[x,y] = [x',y']$.  It follows that $[\Gamma_\ell,\Gamma_\ell] = [\Gamma_\ell^{\sc},\Gamma_\ell^{\sc}]$.
As $\Gamma_\ell^{\sc}$ is a hyperspecial maximal compact for $\ell$ sufficiently large, it suffices to prove that
$\Gamma_\ell^{\sc}$ is perfect for $\ell\gg1$.

Let $\cD/\Z_\ell$ be a semisimple group scheme with $\cD_{\Q_\ell} = \mathbf{D}_{\Q_\ell}$ simply connected and $\Gamma_\ell^{\sc} = \cD(\Z_\ell)$.
Then $\cD_{\F_\ell}$ is simply connected, so for $\ell\ge 5$, $\cD(\F_\ell)$ is a perfect quasi-simple group.  Thus, $[\Gamma_\ell^{\sc},\Gamma_\ell^{\sc}]$
is a closed subgroup of $\cD(\Z_\ell)$ mapping onto $\cD(\F_\ell)$.  By \cite{Vasiu}, if $\ell\gg1$, $[\Gamma_\ell^{\sc},\Gamma_\ell^{\sc}]=\Gamma_\ell^{\sc}$.
\end{proof}

\begin{remark}
Suppose $\bG _\ell$ is connected and reductive. Let $\bG _\ell^{\der}$ be the derived group of $\bG _\ell$. We have a natural map $\pi_\ell:\bG _\ell^{\sc}\to \bG _\ell$. Then $\pi_\ell^{-1}(\Gamma_\ell)=\Gamma_\ell^{\sc}$ for $\ell\gg1$. Indeed, we have $\pi_\ell(\Gamma_\ell^{\sc})\subset \Gamma_\ell\cdot \mathbf{Z}_\ell(\Q_\ell)$ if $\mathbf{Z}_\ell$ denotes the center of $\bG _\ell$. Since $\Gamma_\ell^{\sc}$ is hyperspecial for $\ell\gg1$, it is perfect for $\ell\gg1$ from the proof of Corollary \ref{concrete}. Therefore, we obtain
$$\pi_\ell(\Gamma_\ell^{\sc})=[\pi_\ell(\Gamma_\ell^{\sc}),\pi_\ell(\Gamma_\ell^{\sc})]\subset [\Gamma_\ell,\Gamma_\ell]\subset \Gamma_\ell\cap\bG _\ell^{\der}(\Q_\ell).$$
This implies $\pi_\ell^{-1}(\Gamma_\ell)=\Gamma_\ell^{\sc}$ for $\ell\gg1$ since $\Gamma_\ell^{\sc}$ is maximal compact.
\end{remark}

Corollary~\ref{concrete} covers the classical $\GL_2$ case of Serre's original theorem.
It also covers the case that $\bG _\ell$ is unitary, which arises, for instance, from the cohomology of trielliptic curves.
Fix positive integers $d_1$ and $d_2$ in the same residue class (mod $3$), and consider the projective curve
$$X:y^3z^{d_1+2d_2-3} = \prod_{i=1}^{d_1} (x-u_i z)\prod_{j=1}^{d_2} (x-v_j z)^2 ,$$
defined over $K=\Q(\zeta_3)(u_1,\ldots,u_{d_1},v_1,\ldots,v_{d_2})$.  If $\Gamma_\ell$ denotes the image of $\Gal_K$ in $\Aut(H^1(\bar X,\Q_\ell))$, then
$\Gamma_\ell$ is contained in the centralizer in $\GSp_n(\Q_\ell)$ of an element of order $3$, the image of the automorphism
$(x:y:z)\mapsto (x:\zeta_3 y:z)$.  This centralizer is a unitary group $U_\ell$, and by a result of Achter and Pries, \cite{AP}, for every $\ell$, $\Gamma_\ell$ contains the derived group of $U_\ell$.
By $d_1+d_2$ iterations of the theorem of Cadoret and Tamagawa \cite{CT}, there exist pairwise distinct $a_i$ and $b_j$ in $\Q$ such that specializing $X$ to $u_i=a_i, v_j=b_j$, the resulting
curve over $\Q(\zeta_3)$ satisfies the condition that the Zariski closure of monodromy contains the derived group of $U_\ell$.  This curve is in fact obtained 
from a trielliptic curve $X_0$  over $\Q$ by extending scalars to $\Q(\zeta_3)$. Thus, Corollary~\ref{concrete} applies to $X_0$.
\medskip

Theorem~\ref{main}, Corollary~\ref{high-rank},  Corollary~\ref{global-A}, and Corollary~\ref{concrete} are also true in the following setting. Let $K$ be a finitely generated field extension of $\Q$ and $Y$ a smooth, separated and geometrically connected scheme over $K$. Let $f:X\rightarrow Y$ be a smooth, proper morphism with geometrically connected fibers. Fix an integer $i$. For any geometric point $\bar y$, the \'etale fundamental group $\pi_1(Y,\bar y)$ acts on the $i$th \'etale cohomology of the fiber $X_{\bar y}$ and we obtain a system of $\ell$-adic representations
\begin{equation*}
\rho_\ell \colon \pi_1(Y,\bar y)\rightarrow \Aut(H^i(X_{\bar y},\Q_\ell))\cong\GL_n(\Q_\ell).
\end{equation*}
Up to conjugation, the image $\Gamma_\ell:=\rho_\ell(\pi_1(Y,\bar y))$ and its Zariski closure $\bG _\ell$ in $\GL_n$ do not depend on the choice of $\bar y$. 
Note that in this setting, we assume the system comes from the full cohomology group of the fiber rather than a subsystem.

\begin{thm}
Theorem~\ref{main}, Corollary~\ref{high-rank},  Corollary~\ref{global-A}, and Corollary~\ref{concrete} are true for $\Gamma_\ell$ and $\bG _\ell$ in this setting.
\end{thm}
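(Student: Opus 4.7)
The plan is to run the proofs of Theorem~\ref{main}, Corollary~\ref{high-rank}, Corollary~\ref{global-A}, and Corollary~\ref{concrete} in the $\pi_1$-setting verbatim, upgrading only the two ``global'' inputs that originally appealed to the structure of $\mathrm{Gal}_K$. The entire local $\ell$-adic toolkit of Section 2 --- Lemma~\ref{A-lemma}, Proposition~\ref{points}, Theorem~\ref{finite}, Corollary~\ref{finite-cor}, Proposition~\ref{split}, and most importantly Theorem~\ref{l-adic} --- is stated for compact subgroups of $\GL_n(\Q_\ell)$ and their Zariski closures, makes no reference to $K$, and applies unchanged to $\Gamma_\ell = \rho_\ell(\pi_1(Y,\bar y))$.

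Inspecting the proof of Theorem~\ref{main}, exactly two inputs invoke the arithmetic setting:
\begin{enumerate}
\item[(a)] the $\ell$-independent bound on $[\mathbf{L}_\ell:\mathbf{L}_\ell^\circ]$ from \cite[6.14]{LP}, used to pass to a finite étale cover on which $\mathbf{L}_\ell$ is connected;
\item[(b)] the equality $\rk_\ell \Lambda_\ell = \rk \mathbf{L}_\ell$ for $\ell \gg 1$ from \cite[Theorem A]{Hui2}, used to feed Theorem~\ref{l-adic}(ii) at the end of the argument.
\end{enumerate}
Both can be reduced to their absolute counterparts by specialization. Spread the morphism $X \to Y \to \Spec K$ to a smooth proper $\mathcal{X} \to \mathcal{Y} \to \Spec A$ over a finitely generated $\Z$-subalgebra $A\subset K$. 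By successive applications of Hilbert irreducibility (to trivialize the transcendence degree of $K$ over $\Q$) and of the Cadoret--Tamagawa theorem \cite{CT} (to cut down the relative dimension of $\mathcal{Y}$ one step at a time), one produces a number field $K_0$ and a $K_0$-point $y_0$ of $\mathcal{Y}$ such that for all $\ell$ in a cofinite set of primes the image of $\mathrm{Gal}_{K_0}$ on $H^i(\bar X_{y_0},\Q_\ell)$ has finite index (bounded uniformly in $\ell$) in $\Gamma_\ell$. The fiber at $y_0$ then supplies a Serre-compatible system of $\mathrm{Gal}_{K_0}$-representations whose Zariski closures and component groups agree with the original ones, so (a) and (b) become direct invocations of \cite{LP} and \cite{Hui2}.

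With (a) and (b) in hand, the proof of Theorem~\ref{main} runs verbatim and yields both the hyperspeciality of $\Gamma_\ell^{\sc}$ and the unramifiedness of $\bG_\ell^{\sc}$. Corollary~\ref{high-rank} and Corollary~\ref{global-A} rely only on the $\ell$-independence of $\rk\bG_\ell$ and on the type-A property, both intrinsic to the compatible system and hence preserved under the specialization; and \cite{Hui1} already applies in the relative setting. Corollary~\ref{concrete} combines Corollary~\ref{global-A} with \cite{Vasiu} in exactly the same way as before.

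The main obstacle is the uniformity of the specialization step: one needs a single point $y_0$ at which the $\pi_1$-image is preserved (up to bounded index) \emph{simultaneously} for all sufficiently large $\ell$. This hinges on the fact that the ``image-dropping'' loci provided by Cadoret--Tamagawa sit in a proper closed subset of $\mathcal{Y}$ of bounded complexity independent of $\ell$, so that Hilbert irreducibility produces rational points in the complement that work for cofinitely many $\ell$ at once; once this uniformity is available the remainder of the argument is purely formal.
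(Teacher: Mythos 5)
Your overall strategy --- specialize along a closed point and along the transcendence degree of $K$ until the image of a $\mathrm{Gal}$-group over a number field is commensurable with $\Gamma_\ell$, then transfer the result back --- is the same strategy the paper uses, and you have correctly isolated the two ingredients that are arithmetic in nature (the $\ell$-independent bound on $[\mathbf{L}_\ell:\mathbf{L}_\ell^\circ]$ from \cite{LP} and the rank equality from \cite[Theorem A]{Hui2}), everything else in Section 2 being local in $\ell$. The paper formulates the same step not as ``re-establish (a) and (b)'' but as a direct reduction to the already-proved number-field case (prove the statement for $\{\rho_\ell\circ\sigma(y)\}$ with $k(y)$ eventually a number field, then observe that the $\pi_1$-image differs from it by a bounded-index subgroup); this is a cosmetic difference.

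The genuine gap is in your uniformity-in-$\ell$ claim. You assert that ``the `image-dropping' loci provided by Cadoret--Tamagawa sit in a proper closed subset of $\mathcal{Y}$ of bounded complexity independent of $\ell$,'' and then invoke Hilbert irreducibility to pick one point that works for cofinitely many $\ell$ at once. But the uniform open image theorem of Cadoret--Tamagawa \cite{CT} is a theorem for a \emph{fixed} prime $\ell$: the ``uniform'' in the title refers to uniformity in the specialization point (bounded height / finiteness of the exceptional set), not uniformity across $\ell$. A priori the exceptional set and its complexity depend on $\ell$, so your intersection argument does not close up. The missing ingredient --- which the paper uses explicitly --- is Cadoret's $\ell$-independence theorem (\cite[Cor.~1.4]{Ca}): choose $y$ so that $\Gamma_{\ell_0}^y$ is open in $\Gamma_{\ell_0}$ for a \emph{single} $\ell_0$ (Serre's letter \cite{letter} or \cite{CT} for one $\ell$ suffices), and \cite{Ca} then promotes this to openness of $\Gamma_\ell^y$ in $\Gamma_\ell$ for \emph{all} $\ell$ simultaneously. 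Hilbert irreducibility also cannot substitute here: it controls finite Galois images, not infinite $\ell$-adic ones. The reduction of the transcendence degree of $K$ should instead be done as in the paper --- spread $X_y/k(y)$ out to a smooth proper family over a variety $Y'$ over a number field $K'$, and apply the same (Cadoret--Tamagawa + Cadoret $\ell$-independence) step a second time to reach a closed point with number-field residue field. With \cite[Cor.~1.4]{Ca} in hand, the rest of your argument is correct and matches the paper's.
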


\begin{proof}
Let $\bar Y$ be $Y\times_K\bar K$ and $k(y)$ be the residue field of $y\in Y$. We have the commutative diagram
$$\xymatrix{
0 \ar[r] &\pi_1(\bar Y,\bar y) \ar[r] & \pi_1(Y,\bar y) \ar[r] & \mathrm{Gal}_K \ar[r]& 0\\
&&& \Gal_{k(y)} \ar@{_{(}->}[u] \ar@{_{(}->}[lu]^{\sigma(y)}}$$
with the first row exact, $\Gal_{k(y)}$ a closed subgroup of $\mathrm{Gal}_K$, and $\sigma(y)$  injective. The system of $\ell$-adic representations $\{\rho_\ell\circ \sigma(y)\}$ is isomorphic to the system of Galois actions of $\Gal_{k(y)}$ on $H^i(X_{\bar y},\Q_\ell)$. Denote the image of $\rho_\ell\circ\sigma(y)$ and its Zariski closure in $\GL_n$ by respectively $\Gamma_\ell^y$ and $\bG _\ell^y$.
We have inclusions $\Gamma_\ell^y\subset\Gamma_\ell$ and $\bG _\ell^y\subset \bG _\ell$.

Suppose $\bG _{\ell_0}$ is of type A for some prime $\ell_0$. 
Pick a closed point $y$ of $Y$ such that $\Gamma_{\ell_0}^y$ is open in $\Gamma_{\ell_0}$
(see \cite{letter,Ca}). 
Then the Zariski closure of $\rho_{\ell_0}(\pi_1(\bar Y, \bar y))$ in $\GL_n$ is also of type A. Then Corollary 1.4 of \cite{Ca} implies $\Gamma_\ell^y$ is open in $\Gamma_\ell$ for all $\ell$. Therefore, it suffices to prove the statements for $\{\rho_\ell\circ\sigma(y)\}$. Since $X_y\rightarrow \mathbf{Spec}(k(y))$ is the generic fiber of some smooth, proper morphism $X'\rightarrow Y'$  with geometrically connected fibers, where $Y'$ is smooth, separated, and geometrically connected over a number field $K'$, we repeat the above arguments to further the reduce system $\{\rho_\ell\circ\sigma(y)\}$ to a compatible system of Galois representations of \'etale cohomology arising from some complete, non-singular variety defined over $K'$, and in this setting we can appeal to Theorem~\ref{main}, Corollary~\ref{high-rank},  Corollary~\ref{global-A}, and Corollary~\ref{concrete}.
\end{proof}

\end{document}